\documentclass[preprint,12pt]{elsarticle}

\usepackage{amssymb}
\usepackage{multirow}

\usepackage{amsthm}
\usepackage{graphics}
\usepackage{epsfig}
\usepackage{amssymb}
\usepackage{graphicx}
\usepackage{subfigure}
\usepackage{color}
\usepackage{tikz}
\usetikzlibrary{patterns}
\usepackage{hyperref}
\usepackage{a4wide}
\usepackage{amsmath}
\usepackage{amsfonts}
\usepackage{enumerate}
\newcommand{\pf}{\noindent {\bf Proof: }}

\newtheorem*{theoremaux}{Theorem \theoremauxnum}
\gdef\theoremauxnum{1}

\newtheorem{lemma}{\bf Lemma}[section]
 
\newtheorem{theorem}{\bf Theorem}[section]
\newtheorem{proposition}[lemma]{\bf Proposition}
\newtheorem{corollary}[lemma]{\bf Corollary}
\newtheorem{definition}{\bf Definition}[section]
\newtheorem{remark}{\bf Remark}[section]

\journal{~}

\begin{document}

\begin{frontmatter}



\title{Cyclic Probability of a Finite Group}




\author{Sayanta Chatterjee\corref{cor1}}
\ead{sayanta.math@gmail.com}

\author{Angsuman Das}
\ead{angsuman.maths@presiuniv.ac.in}
\address{Department of Mathematics, Presidency University, Kolkata, India}

\cortext[cor1]{Corresponding author}

\begin{abstract}
The cyclic probability, $\theta(G)$, of a finite group $G$, is defined as the probability that two randomly selected elements of $G$ generate a cyclic subgroup of $G$. In this paper, we study various upper and lower bounds of $\theta(G)$ and show that $\theta(G)$ can be used as a criterion for checking nilpotency and solvability of a finite group. We also compare $\theta(G)$ with other group invariants like commuting probability $cp(G)$ and normalized sum of element orders. 
\end{abstract}

\begin{keyword}
solvable groups \sep nilpotent groups \sep commuting probability
\MSC[2020] 20D99, 20D10, 20D15 

\end{keyword}

\end{frontmatter}


\section{Introduction}
The study of probabilistic properties of finite groups has a rich history, dating at least to the work of Gustafson \cite{gustafson} and Gallagher \cite{gallagher} on the commuting probability $$cp(G)=\dfrac{|L(G)|}{|G \times G|}=\dfrac{K(G)}{|G|}.$$ where $L(G)=\{(x,y)\in G\times G: xy=yx\}=\{(x,y)\in G\times G: \langle x,y\rangle \mbox{ is abelian} \}$ and $K(G)$ denote the number of conjugacy classes in $G$.
 This invariant, which measures the chance that two randomly chosen group elements commute, has been shown to encode significant structural information about $G$. For instance, Gustafson proved that $cp(G)\leq 5/8$ for any non-abelian finite group, with equality characterizing groups isoclinic to the dihedral group $D_4$ or the quaternion group $Q_8$. Subsequent work by Rusin \cite{rusin}, Lescot \cite{lescot}, and others has refined these results and classified groups whose commuting probability exceeds various thresholds.

A natural and important refinement of the commuting condition is to require not merely that $x$ and $y$ commute, but that the entire subgroup they generate is cyclic. This leads the authors in \cite{cycprob_1st_paper} to introduce the notion of cyclic probability. 
\begin{definition}\cite{cycprob_1st_paper}
    Let $G$ be a finite group and $S_G=\{(x,y)\in G\times G: \langle x,y\rangle \mbox{ is cyclic.}\}$. The cyclic probability of $G$ is defined by $$\theta(G)=\dfrac{|S_G|}{|G\times G|}.$$
\end{definition}

While $cp(G)$ measures how often two elements commute, $\theta(G)$ asks how often they generate a cyclic subgroup. The latter condition is strictly stronger, and accordingly $\theta(G)\leq cp(G)$ for every finite group $G$. However, the cyclic probability captures subtler aspects of group structure, particularly in characterizing nilpotent, supersolvable and solvable groups with respect to certain thresholds. Surprisingly, to the best of our knowledge, no follow up works after \cite{cycprob_1st_paper} were done to explore these possibilities.

Apart from commuting probability, there exists some other group invariants (see Definition \ref{sum_of_order_defns}) which are also popular in literature for providing alternative sufficient criterion for nilpotency and solvability of finite groups. We recall a few of them and compare how these invariants perform with respect to cyclic probability.

\subsection{Preliminaries}
We first recall some other relevant group invariants existing in literature.
\begin{definition}\label{sum_of_order_defns}
    Let $G$ be a finite group of order $n$. Define\footnote{It was proved in \cite{amiri-1st} that $\sigma(G)< \sigma(\mathbb{Z}_n)$ for any non-cyclic group $G$ of order $n$. Thus $0<\sigma'(G)\leq 1$.} 
    
    $$\cite{amiri-1st}~ \sigma(G)=\sum_{g \in G} o(g), ~~~ \cite{azad-khosravi-2018}~ \sigma'(G)=\dfrac{\sigma(G)}{\sigma(\mathbb{Z}_n)}, ~~~~~\cite{marius-israel} ~\sigma''(G)=\dfrac{\sigma(G)}{|G|^2}.$$

\end{definition}
It is to be noted that $\sigma, \sigma'$ and $\sigma''$ were originally defined in their respective papers as $\psi,\psi'$ and $\psi''$. However, in this paper, as we will be using the Dedekind Psi function $\psi$ as a key-ingredient, we follow this re-nomenclature.

The above invariants have been used to provide alternative sufficient criterion for nilpotency and solvability of finite groups. We summarize those results in Table \ref{sum_of_order_table}.

\begin{table}[h]
    \centering
    \begin{tabular}{||c||c||}
\hline        Threshold for $\sigma'(G)$ & Threshold for $\sigma''(G)$\\ \hline \hline
      &  \\
      \cite{marius-commalg} $\sigma'(G)>\dfrac{13}{21}\Rightarrow G$ is nilpotent  & \cite{marius-israel} $\sigma''(G)>\dfrac{13}{36}\Rightarrow G$ is nilpotent\\ 
      & \\
\hline 
 & \\ 
 \cite{azad-khosravi-2022} $\sigma'(G)>\dfrac{31}{77}\Rightarrow G$ is supersolvable &  \cite{marius-israel} $\sigma''(G)>\dfrac{31}{144}\Rightarrow G$ is supersolvable\\
 & \\
 \hline
 & \\
 \cite{azad-khosravi-2018} $\sigma'(G)>\dfrac{211}{1617}\Rightarrow G$ is solvable & \cite{marius-israel} $\sigma''(G)>\dfrac{211}{3600}\Rightarrow G$ is solvable \\
 & \\ \hline \hline 
    \end{tabular}
    \caption{Sufficient Conditions for Nilpotency and Solvability}
    \label{sum_of_order_table}
\end{table}

Next we recall two well-known multiplicative functions:
\begin{definition}
Jordan's totient function, denoted as $\mathcal{J}_k(n)$, where $k$ is a positive integer, is a function of a positive integer, $n$ that equals the number of $k$-tuples of positive integers $(a_1,a_2,\ldots,a_k)$ such that $a_i\leq n$ for all $i$ and $gcd(a_1,a_2,\ldots,a_k,n)=1$.    
\end{definition}
$\mathcal{J}_k(n)$ is a multiplicative function, which is a generalization of Euler's totient function, with $\varphi(n)=\mathcal{J}_1(n)$ and it is given by $$\mathcal{J}_k(n)=n^k\prod_{p\mid n}\left(1-\dfrac{1}{p^k} \right), \mbox{ where }p \mbox{ runs over prime divisors of }n.$$
\begin{definition}
    The Dedekind psi function is another multiplicative function defined by $$\psi(1)=1\mbox{ \& }\psi(n)=n\prod_{p\mid n}\left(1+\dfrac{1}{p} \right), \mbox{ where }p \mbox{ runs over  prime divisors of }n.$$
\end{definition}
It can be shown that $$\psi(n)=\dfrac{\mathcal{J}_2(n)}{\mathcal{J}_1(n)}=\dfrac{\mathcal{J}_2(n)}{\varphi(n)}.$$

Some other well-known properties of $\psi(n)$ are given by the following lemma.
\begin{lemma}\label{properties_of_psi}
    The following are true:
    \begin{enumerate}
        \item $\psi$ is multiplicative, i.e., $\psi(ab)=\psi(a)\cdot \psi(b)$ if $gcd(a,b)=1$.
        \item $\psi$ is submultiplicative, i.e., $\psi(ab)\leq \psi(a)\cdot \psi(b)$.
        \item If $a\mid b$, then $\psi(a)\leq \psi(b)$.
        \item If $a\mid b$ and $\psi(a)= \psi(b)$, then $a=b$.  
    \end{enumerate}
\end{lemma}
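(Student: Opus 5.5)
The plan is to derive all four parts from the product formula $\psi(n)=n\prod_{p\mid n}(1+1/p)$, which we treat as the definition. The one idea used throughout is to write a quotient as a product of a ``size'' factor and a ``new primes'' factor.

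For (1), take $\gcd(a,b)=1$. Then $ab$ has exactly the primes dividing $a$ together with the primes dividing $b$, and these two sets are disjoint. So the product over $p\mid ab$ splits as the product over $p\mid a$ times the product over $p\mid b$. Multiplying by $ab=a\cdot b$ gives $\psi(ab)=\psi(a)\psi(b)$. The case where $a$ or $b$ equals $1$ is consistent with the convention $\psi(1)=1$.

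For (2), drop the coprimality assumption. The set of primes dividing $ab$ is the union of the two prime sets, and they may now overlap. Hence
$$\prod_{p\mid ab}\left(1+\tfrac1p\right)=\dfrac{\prod_{p\mid a}\left(1+\tfrac1p\right)\prod_{p\mid b}\left(1+\tfrac1p\right)}{\prod_{p\mid \gcd(a,b)}\left(1+\tfrac1p\right)}\le \prod_{p\mid a}\left(1+\tfrac1p\right)\prod_{p\mid b}\left(1+\tfrac1p\right),$$
because every factor $1+1/p$ is greater than $1$. Multiplying through by $ab$ gives $\psi(ab)\le\psi(a)\psi(b)$.

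For (3) and (4), suppose $a\mid b$ and write $b=ac$. Every prime dividing $a$ also divides $b$. Therefore
$$\dfrac{\psi(b)}{\psi(a)}=c\cdot\prod_{p\mid b,\ p\nmid a}\left(1+\tfrac1p\right).$$
Here $c\ge1$, and the product over the new primes is at least $1$, so $\psi(a)\le\psi(b)$, which is (3). For (4), equality forces both factors to equal $1$, since each is at least $1$. In particular $c=1$, so $a=b$.

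None of these steps is a real obstacle. The only point that needs care is the bookkeeping for the overlapping prime sets in (2). It is handled by the inclusion–exclusion identity displayed above, which holds because each prime dividing both $a$ and $b$ is counted twice in the product on the right and once on the left.
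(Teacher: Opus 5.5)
Your proof is correct and complete. The prime-set bookkeeping in (1) and (2) is right, as is the factorization $\psi(b)/\psi(a)=c\prod_{p\mid b,\,p\nmid a}(1+1/p)$ in (3) and (4), including the cases where the products are empty.

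The paper gives no proof of this lemma; it quotes the four parts as well-known facts. So there is no competing route to compare, and your derivation from the product formula is the standard verification. As a by-product, your identity in (2) also gives the equality case: $\psi(ab)=\psi(a)\psi(b)$ if and only if $\gcd(a,b)=1$.
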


\subsection{Our Contribution}
In Section \ref{general_bounds_section}, we prove some upper and lower bounds of $\theta(G)$ in terms of other group invariants and quotients. In Section \ref{specific_values_section}, we classify groups for specific values of $\theta(G)$ like $5/8,1/2,7/16$ etc. In Section \ref{sufficient_conditions_section}, we provide some sufficient conditions for supersolvability and solvability of a group $G$ based on the value of $\theta(G)$.  Moreover, we compare these results with those mentioned in Table \ref{sum_of_order_table}. 


\section{General Bounds}\label{general_bounds_section}
Before delving into specific values of the cyclic probability, it is natural to ask how $\theta(G)$ relates to other group-theoretic invariants and how it behaves under basic constructions such as quotients and direct products. In this section, we establish general upper and lower bounds for $\theta(G)$ in terms of the commuting probability, the sum of element orders, and the group’s exponent. We also derive explicit bounds depending on the smallest prime divisor of $|G|$ and characterize when equality can occur. We start by noting the values of $\theta(G)$ for some well known families of groups (see Table \ref{family}), which can be computed using Proposition \ref{psi_connection_proposition}.

\begin{table}[h]
        \centering
        \begin{tabular}{|c|c|c||c|c|c|}
          \hline  Group $G$ & Order & $\theta(G)$ & Group $G$ & Order & $\theta(G)$\\ \hline 
           & & & & &\\
           $\mathbb{Z}^n_p$ & $p^n, ~(n\geq 1)$ & $\dfrac{p^{n}+p^{n-1}-1}{p^{2n-1}}$ & $\mathbb{Z}_{2^{n-1}}\times \mathbb{Z}_2$  & $2^n, ~(n\geq 2)$ & $\dfrac{1}{2}+\dfrac{1}{2^{2n-1}}$\\
           & & & & &\\ \hline 
            & & & & &\\
           $D_n$  & $2n, ~(n\geq 3)$ & $\dfrac{1}{4}+\dfrac{3}{4n}$ & $Q_{2^n}$  & $2^n,  ~(n\geq 3)$ & $\dfrac{1}{4}+\dfrac{3}{2^n}$\\
           & & & & &\\ \hline 
            & & & & &\\
           $SD_{2^n}$  & $2^n,  ~(n\geq 4)$ & $\dfrac{1}{4}+\dfrac{9}{2^{n+2}}$ & $M_{2^n}$  & $2^n,  ~(n\geq 4)$ & $\dfrac{1}{2}+\dfrac{1}{2^{2n-1}}$\\ 
           & & & & &\\ \hline 
        \end{tabular}
        \caption{Cyclic Probability of Some Families of Groups}
        \label{family}
\end{table}

\begin{proposition}
    Let $G$ be a finite group. Then $cp(G)=\theta(G)$ if and only if all Sylow subgroups of $G$ are either cyclic or generalized quaternion.
\end{proposition}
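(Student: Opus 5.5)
Since $S_G\subseteq L(G)$, we have $cp(G)=\theta(G)$ if and only if $S_G=L(G)$, i.e., if and only if every pair of commuting elements generates a cyclic subgroup. This holds exactly when every abelian subgroup of $G$ generated by two elements is cyclic. A finite abelian group in which any two elements generate a cyclic subgroup is itself cyclic: if it is not cyclic, some Sylow $p$-subgroup is not cyclic, so it contains a copy of $\mathbb{Z}_p\times\mathbb{Z}_p$, which is $2$-generated and non-cyclic. So the condition $cp(G)=\theta(G)$ is equivalent to: $G$ has no subgroup isomorphic to $\mathbb{Z}_p\times\mathbb{Z}_p$ for any prime $p$. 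Equivalently, every abelian subgroup of $G$ is cyclic.

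To finish, I will use the classical result (Burnside; see e.g.\ Gorenstein or Robinson) that a finite $p$-group has a unique subgroup of order $p$ if and only if it is cyclic or generalized quaternion. Such a $p$-group contains no $\mathbb{Z}_p\times\mathbb{Z}_p$, and conversely a $p$-group with no $\mathbb{Z}_p\times\mathbb{Z}_p$ has a unique subgroup of order $p$. The converse needs a short argument. Take two distinct subgroups $\langle a\rangle,\langle b\rangle$ of order $p$, with $\langle a\rangle\le Z(P)$, which is possible since $Z(P)\neq 1$. Then $\langle a,b\rangle\cong\mathbb{Z}_p\times\mathbb{Z}_p$. If instead $Z(P)$ has a unique subgroup of order $p$ and $\langle b\rangle$ is another subgroup of order $p$, then $a$ and $b$ still commute, giving the same contradiction.

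For the forward direction, suppose $cp(G)=\theta(G)$ and let $P$ be a Sylow $p$-subgroup. Then $P$ contains no $\mathbb{Z}_p\times\mathbb{Z}_p$, so by the above it is cyclic or generalized quaternion. For the reverse direction, suppose every Sylow subgroup is cyclic or generalized quaternion, and suppose $H\cong\mathbb{Z}_p\times\mathbb{Z}_p\le G$. By Sylow's theorem $H$ lies in some Sylow $p$-subgroup, which would then have more than one subgroup of order $p$, a contradiction. Hence every commuting pair generates an abelian subgroup with all Sylow subgroups cyclic, which is therefore cyclic. So $S_G=L(G)$.

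The main point requiring care is the Burnside characterization, which I would cite rather than reprove. Everything else is the elementary reduction of $S_G=L(G)$ to the absence of $\mathbb{Z}_p\times\mathbb{Z}_p$ subgroups.
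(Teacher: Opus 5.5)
Your proof is correct and follows the same route as the paper: the inclusion $S_G\subseteq L(G)$ reduces $cp(G)=\theta(G)$ to the condition that every commuting pair generates a cyclic subgroup, i.e., that the commuting graph coincides with the enhanced power graph. The paper simply cites Theorem 30 of the enhanced-power-graph reference for the equivalence of this condition with the Sylow condition, whereas you reprove it from the absence of $\mathbb{Z}_p\times\mathbb{Z}_p$ subgroups together with Burnside's characterization of $p$-groups with a unique subgroup of order $p$ (your sentence beginning ``If instead $Z(P)$\ldots'' is redundant, since the argument just before it already covers every case).
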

\pf It follows from the fact that $S_G\subseteq L_G$ and Theorem 30 \cite{enhanced-1st paper}.

\begin{proposition}\label{psi_connection_proposition}
Let $G$ be a finite group, $\mathcal{C}(G)$ be the set of all cyclic subgroups of $G$ and $c_k$ be the number of elements of order $k$ in $G$. Then $$|S_G|=\sum_{C \in \mathcal{C}(G)} \mathcal{J}_2(|C|)=\sum_{g \in G}\psi(o(g))=\sum_{k\mid |G|}c_k\psi(k)\leq K(G)\cdot |G|.$$
\end{proposition}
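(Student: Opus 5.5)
The approach is to count $S_G$ by fibering over the cyclic subgroup generated by each pair. Every pair $(x,y)\in S_G$ generates a unique cyclic subgroup $C=\langle x,y\rangle\in\mathcal{C}(G)$. Conversely, for a fixed cyclic $C\cong\mathbb{Z}_m$, the pairs $(x,y)\in C\times C$ with $\langle x,y\rangle=C$ are in bijection with pairs $(a,b)\in\mathbb{Z}_m^2$ with $\gcd(a,b,m)=1$. By definition there are $\mathcal{J}_2(m)$ such pairs. Summing over $C$ gives the first equality $|S_G|=\sum_{C\in\mathcal{C}(G)}\mathcal{J}_2(|C|)$.

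For the second equality, I will group the cyclic subgroups by generators. Each cyclic subgroup $C$ of order $m$ has exactly $\varphi(m)$ generators, and each $g\in G$ generates exactly one cyclic subgroup, of order $o(g)$. Hence
\[
\sum_{g\in G}\psi(o(g))=\sum_{C\in\mathcal{C}(G)}\varphi(|C|)\,\psi(|C|)=\sum_{C\in\mathcal{C}(G)}\mathcal{J}_2(|C|),
\]
using the identity $\psi(n)=\mathcal{J}_2(n)/\varphi(n)$ recorded earlier. An equivalent direct route is to fix $x$ and count the $y$ with $\langle x,y\rangle$ cyclic, but the generator-grouping is cleaner. The third equality is just regrouping the sum by element order, since $c_k$ counts the elements of order $k$ and all orders divide $|G|$.

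The inequality is the only non-bookkeeping step, and I expect it to be the main (though mild) obstacle. Since $S_G\subseteq L(G)$ (a cyclic subgroup is abelian), we get $|S_G|\le |L(G)|$. By the classical identity recalled in the introduction, $|L(G)|=cp(G)\,|G|^2=K(G)\,|G|$. To be self-contained I would verify this identity via $|L(G)|=\sum_{x}|C_G(x)|=\sum_x |G|/|x^G|=|G|\cdot K(G)$ by orbit–stabilizer. This completes the chain.
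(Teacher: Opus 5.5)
Your proposal is correct and follows essentially the same route as the paper: you fiber $S_G$ over the cyclic subgroup $\langle x,y\rangle$ to get $\mathcal{J}_2(|C|)$ per subgroup, redistribute over the $\varphi(|C|)$ generators using $\psi=\mathcal{J}_2/\varphi$, regroup by element order, and bound by $|L(G)|=K(G)\,|G|$. The only difference is that you verify $|L(G)|=K(G)\,|G|$ directly by orbit--stabilizer, whereas the paper simply cites the commuting-probability formula.
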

\pf For a given cyclic subgroup $C$ of $G$, let $f(C)$ be the number of ordered pairs $(x,y)\in C \times C$ such that $\langle x,y \rangle=C$. Then $|S_G|=\sum_{C \in \mathcal{C}(G)}f(C)$. So, it is enough to find $f(C)$. Let $|C|=d$. Then $C \cong \mathbb{Z}_d$. Now $\langle x,y \rangle=\mathbb{Z}_d$ if and only if $gcd(x,y,d)=1$. Thus $f(C)=\mathcal{J}_2(|C|)$. Hence the first equality follows.

As every cyclic subgroup $C$ of order $d$ has exactly $\varphi(d)$ generators, we distribute $\mathcal{J}_2(|C|)$ equally among all the generators of $C$. Thus we have $$|S_G|=\sum_{g \in G}\dfrac{\mathcal{J}_2(o(g))}{\varphi(o(g))}=\sum_{g \in G}\psi(o(g)).$$

The third equality is obtained by suitable grouping of terms in the second equality. 

The last inequality follows from the formula of commuting probability $cp(G)$.\qed

\begin{proposition}\label{quotient-direct-product-prop}
    Let $G$ and $H$ be finite groups and $N\lhd G$. Then the following are true:
    \begin{itemize}
        \item $\theta(G)\leq \theta(G/N)$.
        \item $\theta(G\times H)\leq \theta(G)\cdot \theta(H)$. Equality holds if and only if $gcd(|G|,|H|)=1$.
    \end{itemize}
\end{proposition}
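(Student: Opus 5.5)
For the first item, I will compare $S_G$ with $S_{G/N}$ through the fibres of the projection $\pi: G\times G\to G/N\times G/N$. If $\langle x,y\rangle$ is cyclic, then $\langle xN,yN\rangle=\pi(\langle x,y\rangle)$ is cyclic, so $\pi(S_G)\subseteq S_{G/N}$. Every fibre of $\pi$ has exactly $|N|^2$ elements, hence $|S_G|\le |N|^2|S_{G/N}|$. Dividing by $|G|^2=|N|^2|G/N|^2$ gives $\theta(G)\le\theta(G/N)$.

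For the second item, I will again use the inclusion. The coordinate projections send a pair $((g_1,h_1),(g_2,h_2))$ with $\langle (g_1,h_1),(g_2,h_2)\rangle$ cyclic to $(g_1,g_2)\in S_G$ and $(h_1,h_2)\in S_H$, because homomorphic images of cyclic groups are cyclic. This gives an injection $S_{G\times H}\hookrightarrow S_G\times S_H$, so $|S_{G\times H}|\le |S_G||S_H|$, which is the inequality.

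For the equality case, I will work through Proposition \ref{psi_connection_proposition}:
$$|S_{G\times H}|=\sum_{(g,h)}\psi(\mathrm{lcm}(o(g),o(h))) \quad\text{and}\quad |S_G||S_H|=\sum_{(g,h)}\psi(o(g))\psi(o(h)).$$
The termwise comparison runs as follows. With $a=o(g)$, $b=o(h)$ and $d=\gcd(a,b)$, we have $ab=\mathrm{lcm}(a,b)\,d$, so Lemma \ref{properties_of_psi} gives
$$\psi(\mathrm{lcm}(a,b))\le \psi(\mathrm{lcm}(a,b))\,\psi(d)\text{-type bounds}\ \Rightarrow\ \psi(\mathrm{lcm}(a,b))\le\psi(ab)\le\psi(a)\psi(b).$$
The first step uses monotonicity under divisibility (part 3), and the second uses submultiplicativity (part 2).

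\textbf{If $\gcd(|G|,|H|)=1$.} Then $\gcd(a,b)=1$ for every pair, so $\mathrm{lcm}(a,b)=ab$, and multiplicativity (part 1) makes every term an equality. Hence $\theta(G\times H)=\theta(G)\theta(H)$.

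\textbf{If $\gcd(|G|,|H|)>1$.} Take a prime $p$ dividing both orders, and by Cauchy choose $g\in G$, $h\in H$ of order $p$. Then $\mathrm{lcm}(p,p)=p$, so this pair contributes $\psi(p)=p+1$ to $|S_{G\times H}|$ but $(p+1)^2$ to $|S_G||S_H|$. Since every other term satisfies $\le$, the inequality is strict.

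I expect the main obstacle to be organising this termwise comparison, namely the chain $\psi(\mathrm{lcm}(a,b))\le\psi(a)\psi(b)$. It follows directly from parts 2 and 3 of Lemma \ref{properties_of_psi} because $\mathrm{lcm}(a,b)\mid ab$. It also avoids reasoning about the injection's image directly.
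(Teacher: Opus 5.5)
Your proof is correct. The first item is exactly the paper's fibre-counting argument. Your equality analysis is also the paper's: a termwise chain $\psi(\mathrm{lcm}(a,b))\le\psi(ab)\le\psi(a)\psi(b)$ over the sums of Proposition \ref{psi_connection_proposition}. You make the strictness more explicit than the paper does, which only cites Lemma \ref{properties_of_psi}(4) and leaves the use of Cauchy implicit; you exhibit a pair of elements of order $p$ contributing $p+1$ against $(p+1)^2$.

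What genuinely differs is the inequality itself. The paper gets it from the same $\psi$ chain. You get it from the injection $S_{G\times H}\hookrightarrow S_G\times S_H$ induced by the coordinate projections, which is more elementary and needs no arithmetic of $\psi$. As written, though, the injection is redundant, because the $\psi$ chain you then use for the equality case already yields the inequality.

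If you prefer the combinatorial route, you can finish without $\psi$ at all. For coprime orders the map is onto: with $m=|\langle g_1,g_2\rangle|$ and $n=|\langle h_1,h_2\rangle|$ coprime, the group $\langle (g_1,h_1),(g_2,h_2)\rangle$ is a subgroup of $\langle g_1,g_2\rangle\times\langle h_1,h_2\rangle\cong \mathbb{Z}_m\times\mathbb{Z}_n$, hence cyclic. If a prime $p$ divides both orders, take $o(g)=o(h)=p$. The element $((g,e_G),(e_H,h))\in S_G\times S_H$ has as its only preimage the pair $((g,e_H),(e_G,h))$, which generates $\mathbb{Z}_p\times\mathbb{Z}_p$, so the map is not onto.

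Separately, the displayed line mentioning ``$\psi(d)$-type bounds'' says nothing useful. Replace it by the single observation $\mathrm{lcm}(a,b)\mid ab$, followed by parts 3 and 2 of Lemma \ref{properties_of_psi}.
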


\pf Let $S_G=\{(x,y)\in G \times G: \langle x,y \rangle \mbox{ is cyclic}\}$ and $S_{G/N}=\{(aN,bN)\in G/N \times G/N: \langle aN,bN \rangle \mbox{ is cyclic}\}$. Define $\pi: G \rightarrow G/N$ by $\pi(g)=gN$. Then $(x,y)\in S_G$ implies $(\pi(x),\pi(y))\in S_{G/N}$. Thus $$S_G\subseteq \pi^{-1}(S_{G/N})=\{(x,y)\in G\times G: \langle \pi(x),\pi(y) \rangle \mbox{ is cyclic}\}.$$

For a fixed $(aN,bN)\in S_{G/N}$, the set of all $(x,y)$ with $\pi(x)=aN$ and $\pi(y)=bN$ is precisely the coset $aN\times bN$, which has size $|N|^2$. Hence $$|\pi^{-1}(S_{G/N})|=|S_{G/N}|\cdot |N|^2, \mbox{ i.e., }|S_G|\leq |S_{G/N}|\cdot |N|^2.$$ Dividing both the sides by $|G|^2=|G/N|^2\cdot |N|^2$, we get $\theta(G)\leq \theta(G/N)$.

For the next inequality, $$|S_{G\times H}|=\sum_{(g,h)\in G\times H} \psi(o(g,h))=\sum_{(g,h)\in G\times H} \psi(lcm(o(g),o(h)))\leq \sum_{(g,h)\in G\times H} \psi(o(g)\cdot o(h))$$

$$\leq \sum_{(g,h)\in G\times H} \psi(o(g))\cdot \psi(o(h))=\left(\sum_{g \in G} \psi(o(g)\right)\cdot \left(\sum_{h \in H} \psi(o(h)\right)=|S_G|\cdot |S_H|.$$
Dividing both the sides by $|G\times H|^2=|G|^2\cdot |H|^2$, we get the required inequality. Using Lemma \ref{properties_of_psi}(4), it follows that equality holds if and only if $lcm(o(g),o(h))=o(g)\cdot o(h)$ for all $g \in G, h \in H$ if and only if $gcd(|G|,|H|)=1$.



\begin{remark}
     Unlike $cp(G)$, $\theta(G)$ does not satisfy the following inequalities, in general: 
     \begin{enumerate}
        \item $\theta(G)\leq \theta(N)$.
         \item $\theta(G)\leq \theta(N)\cdot \theta(G/N)$.
     \end{enumerate}
      For example, if $G$ is a group of order $96$ with GAP id $(96,30)$, then it has a normal subgroup $N\cong \mathbb{Z}_2\times D_{12}$ of order $48$ and $G/N\cong \mathbb{Z}_2$ such that $$\theta(N)=\dfrac{21}{128}<\dfrac{85}{512}=\theta(G)$$
\end{remark}

\begin{theorem}\label{exponent-upper-bound}
    For any finite group $G$, $$\sigma''(G)\leq \theta(G)\leq \dfrac{\psi(exp(G))}{|G|}.$$
\end{theorem}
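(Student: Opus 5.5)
Both inequalities will come from the formula $|S_G|=\sum_{g\in G}\psi(o(g))$ of Proposition \ref{psi_connection_proposition}, by comparing the summand $\psi(o(g))$ term by term with a simpler quantity. Dividing by $|G|^2$ then gives the two bounds on $\theta(G)$.

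\emph{Lower bound.} From the definition, $\psi(n)=n\prod_{p\mid n}(1+1/p)\geq n$ for every $n\geq 1$, with $\psi(1)=1$. Hence $\psi(o(g))\geq o(g)$ for each $g\in G$. Summing over $G$ gives
\[
|S_G|\geq \sum_{g\in G}o(g)=\sigma(G),
\]
and dividing by $|G|^2$ yields $\theta(G)\geq \sigma(G)/|G|^2=\sigma''(G)$.

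\emph{Upper bound.} For each $g\in G$, the order $o(g)$ divides $exp(G)$. By Lemma \ref{properties_of_psi}(3), $\psi(o(g))\leq \psi(exp(G))$. Summing over the $|G|$ elements gives
\[
|S_G|\leq |G|\cdot\psi(exp(G)),
\]
so $\theta(G)\leq |G|\,\psi(exp(G))/|G|^2=\psi(exp(G))/|G|$.

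Neither step is a real obstacle. The only point to check is that Lemma \ref{properties_of_psi}(3) applies, and it does because $o(g)\mid exp(G)$ for every $g$.
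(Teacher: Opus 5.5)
Your proposal is correct and follows the paper's proof: both bounds come from comparing each summand of $|S_G|=\sum_{g\in G}\psi(o(g))$ termwise. The lower bound uses $o(g)\leq\psi(o(g))$, and the upper bound uses $\psi(o(g))\leq\psi(exp(G))$, which holds by monotonicity of $\psi$ along divisors since $o(g)\mid exp(G)$.
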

\pf As $a|b$ implies $\psi(a)\leq \psi(b)$, the upper bound follows from the fact that $$|S_G|=\sum_{g \in G}\psi(o(g))\leq \sum_{g \in G}\psi(exp(G))=|G|\cdot \psi(exp(G)).$$ As $o(g)\leq \psi(o(g))$ for all $g \in G$, it follows that $\sigma(G)\leq |S_G|$. Dividing both sides by $|G|^2$, we get $\sigma''(G)\leq \theta(G)$. \qed 

\begin{remark}
    Note that $\sigma'(G)$ and $\theta(G)$ are not comparable in general. For example, if $G\cong (((\mathbb{Z}_9\times \mathbb{Z}_3)\rtimes \mathbb{Z}_3)\rtimes \mathbb{Z}_2$ (GAP id: $(162,14)$), then $221/2187=\theta(G)>\sigma'(G)=1447/14763$, whereas for all groups $G$ of order $<162$, $\theta(G)\leq \sigma'(G)$ holds.
\end{remark}

\begin{theorem}\label{smallest-prime-theorem}
    If $G$ is a finite non-cyclic group and $p$ is the smallest prime dividing $|G|$, then $$\dfrac{(p+1)|G|-p}{|G|^2} \leq \theta(G)\leq \dfrac{p^2+p-1}{p^3}.$$ Moreover, equality in the upper bound occurs if and only if $G\cong Q_8\times \mathbb{Z}_m$ (where $m$ is odd) or $(\mathbb{Z}_p\times\mathbb{Z}_p) \times \mathbb{Z}_m$ (where $m$ is such that all prime factors of $m$ are greater than $p$).
\end{theorem}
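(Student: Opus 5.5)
The plan is to prove the lower bound by a direct count and the upper bound by splitting into the abelian and non-abelian cases. The characterization of equality in the upper bound is the delicate part.

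\textbf{Lower bound.} I will use Proposition \ref{psi_connection_proposition}, which gives $|S_G|=\sum_{g\in G}\psi(o(g))$. The identity contributes $\psi(1)=1$. For $g\neq 1$, every prime divisor of $n=o(g)$ is at least $p$, so $n\geq p$. Moreover $\psi(n)=n\prod_{q\mid n}(1+1/q)\geq n+1$ whenever $n>1$, since the product exceeds $1+1/n$ already through the factor for one prime. Hence $\psi(o(g))\geq p+1$, and therefore
$$|S_G|\geq 1+(|G|-1)(p+1)=(p+1)|G|-p.$$
Dividing by $|G|^2$ gives the lower bound.

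\textbf{Upper bound, abelian case.} Suppose $G$ is abelian and non-cyclic. Then some Sylow $q$-subgroup is non-cyclic, so $G$ has a quotient $G/N\cong\mathbb{Z}_q\times\mathbb{Z}_q$ with $q\geq p$. Proposition \ref{quotient-direct-product-prop} and Table \ref{family} then give
$$\theta(G)\leq\frac{q^2+q-1}{q^3}.$$
The function $q\mapsto 1/q+1/q^2-1/q^3$ is decreasing, so this is at most $(p^2+p-1)/p^3$.

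For equality in this case we need $q=p$ and equality in the quotient inequality, i.e.\ $S_G=\pi^{-1}(S_{G/N})$. I will argue as follows.
\begin{itemize}
\item If the Sylow $p$-subgroup were larger than $\mathbb{Z}_p\times\mathbb{Z}_p$, I would exhibit a pair $(x,y)$ whose images generate a cyclic subgroup of $G/N$ while $\langle x,y\rangle$ is non-cyclic. Alternatively, I can compare with the direct computation for $\mathbb{Z}_{p^a}\times\mathbb{Z}_{p^b}$ using the product formula in Proposition \ref{quotient-direct-product-prop}.
\item Any other non-cyclic Sylow $r$-subgroup would give a further strict drop, again by Proposition \ref{quotient-direct-product-prop}.
\end{itemize}
This forces $G\cong(\mathbb{Z}_p\times\mathbb{Z}_p)\times\mathbb{Z}_m$ with $m$ cyclic of order coprime to $p$, so all primes of $m$ exceed $p$. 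Conversely, this group attains the bound by the coprime product formula together with $\theta(\mathbb{Z}_m)=1$.

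\textbf{Upper bound, non-abelian case.} Here I use $\theta(G)\leq cp(G)$ together with the known generalization of Gustafson's bound: $cp(G)\leq (p^2+p-1)/p^3$ for non-abelian $G$ whose smallest prime divisor is $p$. That bound comes from $|G:Z(G)|\geq p^2$ and the fact that non-central centralizers have index at least $p$.

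Equality requires both $cp(G)=\theta(G)$ and equality in the $cp$-bound. The first condition, by the first Proposition of this section, means every Sylow subgroup is cyclic or generalized quaternion. The second means $G/Z(G)\cong\mathbb{Z}_p\times\mathbb{Z}_p$.

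The main obstacle is combining these two conditions. If all Sylow subgroups are cyclic, then $G$ is a metacyclic $Z$-group. I expect to contradict $G/Z(G)\cong\mathbb{Z}_p^2$ by showing that the Sylow $p$-subgroup $P$ is cyclic and maps onto the Sylow $p$-subgroup of $G/Z(G)$. That image would then be cyclic, but a cyclic group cannot surject onto $\mathbb{Z}_p\times\mathbb{Z}_p$. Concretely, $PZ(G)/Z(G)\cong P/(P\cap Z(G))$ is cyclic.

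Hence some Sylow subgroup is generalized quaternion, forcing $p=2$ and $G/Z(G)\cong\mathbb{Z}_2^2$. Then the Sylow $2$-subgroup $Q$ satisfies $Q/(Q\cap Z)\cong\mathbb{Z}_2^2$. Among generalized quaternion groups this forces $Q\cong Q_8$, since $Q_{2^n}/Z(Q_{2^n})$ is dihedral. The odd part lies in $Z(G)$ and is cyclic, which yields $G\cong Q_8\times\mathbb{Z}_m$ with $m$ odd. Finally, Proposition \ref{quotient-direct-product-prop} with $\theta(Q_8)=5/8$ from Table \ref{family} confirms that this group attains equality.
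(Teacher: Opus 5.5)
Your proposal is correct, and it reaches the equality characterization by a different route from the paper.

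\textbf{The paper's route.} For $p=2$ it defers to Theorem \ref{5/8-bound-theorem}, which is proved later. For odd $p$ it uses $G/Z(G)\cong\mathbb{Z}_p\times\mathbb{Z}_p$ only to obtain nilpotency. It then reduces to $\theta(G_p)=(p^2+p-1)/p^3$ with all other Sylow subgroups cyclic. Next it applies the exponent bound of Theorem \ref{exponent-upper-bound} to force $|G_p|=p\cdot\exp(G_p)$, and invokes Huppert's classification of $p$-groups with a cyclic maximal subgroup. Finally it eliminates $\mathbb{Z}_{p^k}\times\mathbb{Z}_p$ and $M_{p^{k+1}}$ for $k\ge 2$ by computing $\theta=\frac{1}{p}+\frac{p-1}{p^{2k+1}}$.

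\textbf{Your route.} You treat all $p$ uniformly. In the non-abelian case you combine $\theta(G)=cp(G)$ (every Sylow subgroup cyclic or generalized quaternion) with $P/(P\cap Z(G))\cong\mathbb{Z}_p^2$. This immediately forces the Sylow $p$-subgroup to be generalized quaternion, hence $p=2$. Then $Q\cong Q_8$, because $|Q\cap Z(G)|\le |Z(Q)|=2$ gives $|Q|\le 8$. This shows directly that no non-abelian group attains the bound for odd $p$. It also avoids both the forward reference and the classification theorem.

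In the abelian case you replace order-counting by strictness of the quotient inequality. The witness pair you only announce is easy to supply:
\begin{itemize}
\item Since $G_{p'}\le N$, we have $P/(P\cap N)\cong G/N\cong\mathbb{Z}_p^2$. So $|P|>p^2$ makes $P\cap N$ nontrivial; pick $x\in P\cap N$ of order $p$.
\item Pick $y$ of order $p$ outside $\langle x\rangle$. This is possible because the elements of $P$ of order dividing $p$ form a subgroup of order at least $p^2$, as $P$ is non-cyclic abelian.
\item Then $\langle xN,yN\rangle=\langle yN\rangle$ is cyclic, while $\langle x,y\rangle\cong\mathbb{Z}_p^2$ is not. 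Hence $S_G\subsetneq\pi^{-1}(S_{G/N})$ and the inequality is strict.
\end{itemize}

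\textbf{Comparison.} The paper's route is more computational and produces explicit values of $\theta$ along the way. Yours is shorter and more structural. Its cost is that the non-abelian case relies on the criterion that $\theta(G)=cp(G)$ exactly when all Sylow subgroups are cyclic or generalized quaternion (the paper's first proposition, via the cited Theorem 30).
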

\pf Since $\psi(n)$ is strictly increasing for divisors, the minimum value $\psi(o(g))$ (where $g$ is non-trivial element of $G$) can take is $\psi(p) = p + 1$ and $\psi(o(e))=1$. Thus the lower bound follows from the observation that $$|S_G|=\sum_{g \in G}\psi(o(g))\geq 1+ \sum_{g \in G}(|G|-1)(p+1).$$

For the upper bound, first note that for $p=2$, the formula gives the upper bound $5/8$, which has been proved in Theorem \ref{5/8-bound-theorem}. So, we assume that $p$ is an odd prime. If $G$ is non-abelian, then $\theta(G)\leq cp(G)\leq \frac{p^2+p-1}{p^3}$ (by Theorem 14, \cite{sen}). Moreover, equality holds if and only if $G/Z(G)\cong \mathbb{Z}_p\times \mathbb{Z}_p$, which implies $G$ is nilpotent. So we assume that $G$ is abelian, non-cyclic group of odd order.

Since $G$ is nilpotent, it is the direct product of its Sylow subgroups and at least one Sylow subgroup, say Sylow $q$-subgroup $Q$, is non-cyclic. As $Q$ is non-cyclic, $Q/\Phi(Q)\cong (\mathbb{Z}_q)^k$ with $k\geq 2$, i.e., $$\theta(G)\leq \theta(Q)\leq \theta(Q/\Phi(Q))\leq \theta(\mathbb{Z}^k_q)\leq \dfrac{q^2+q-1}{q^3}.$$ As $p\leq q$ and $x\mapsto \frac{x^2+x-1}{x^3}$ is a strictly decreasing function, we have the required upper bound. It is to be noted that for equality in the upper bound, $G$ must be nilpotent and hence $$\dfrac{p^2+p-1}{p^3}=\theta(G)=\prod_{q\mid |G|} \theta(G_q).$$ This implies that all Sylow subgroups $G_q$'s, except the one corresponding to the smallest prime divisor $p$, must be cyclic and $\theta(G)=\theta(G_p)=\frac{p^2+p-1}{p^3}$. 

If $p=2$, then $\theta(G_2)=5/8$ and by Theorem \ref{5/8-bound-theorem}, $G_2\cong \mathbb{Z}_2\times \mathbb{Z}_2$ or $Q_8$. If $p$ is odd, let $|G_p|=p^n$ and $exp(G_p)=p^k$. Hence by Theorem \ref{exponent-upper-bound}, $$\dfrac{p^2+p-1}{p^3}\leq \dfrac{p^{k-1}(p+1)}{p^{n}}=\dfrac{p+1}{p^{n-k+1}}, \mbox{ i.e., }p^{n-k+1}\leq p^3\cdot \dfrac{p+1}{p^2+p-1}<p^3.$$ Thus $k\leq n\leq k+1$. As $n=k$ implies that $G_p$ is cyclic, we have $n=k+1$. Thus $G_p$ is a non-cyclic group of order $p^{k+1}$ with a cyclic normal subgroup of order $p^k$. So, by Theorem 14.9 (a) (pg. 96, \cite{huppert}), $G_p\cong \mathbb{Z}_{p^k}\times \mathbb{Z}_p$ or $\mathbb{Z}_{p^k}\rtimes \mathbb{Z}_p$ (in case, $k\geq 2$). In both the cases, their cyclic probabilities are same as their order profiles are same and if $k\geq 2$ and $p$ being an odd prime, we have $$\theta(G_p)=\dfrac{1}{p}+\dfrac{p-1}{p^{2k+1}}<\dfrac{1}{p}+\dfrac{p-1}{p^{5}}<\dfrac{p^2+p-1}{p^3}, \mbox{ a contradiction.}$$ Thus $k=1$, i.e., $G_p\cong \mathbb{Z}_p\times \mathbb{Z}_p$. Hence, combining all the cases, we conclude that equality in the above upper bound occurs if and only if $G\cong Q_8\times \mathbb{Z}_m$ (where $m$ is odd) or $(\mathbb{Z}_p\times\mathbb{Z}_p) \times \mathbb{Z}_m$ (where $m$ is such that all prime factors of $m$ are greater than $p$). \qed 


\begin{proposition}\label{G'-prop-1}
    If $G \in \mathcal{G}_p$ (the set of finite groups with $p$ as the smallest prime factor of $|G|$), then $$\theta(G)\leq \dfrac{1}{p^2}\left(1+\dfrac{p^2-1}{|G'|} \right).$$
\end{proposition}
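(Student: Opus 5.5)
The plan is to go through the commuting probability. Since $S_G\subseteq L(G)$, we have $\theta(G)\leq cp(G)=K(G)/|G|$; the last inequality in Proposition \ref{psi_connection_proposition} records the same fact. So it suffices to prove
$$cp(G)\leq \dfrac{1}{p^2}\left(1+\dfrac{p^2-1}{|G'|}\right) \qquad \mbox{for every } G\in\mathcal{G}_p.$$
This is a Gustafson-type bound, and I would prove it with the character theory of $G$.

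Write $\mathrm{Irr}(G)$ for the set of complex irreducible characters of $G$. Then $|\mathrm{Irr}(G)|=K(G)$ and $\sum_{\chi\in \mathrm{Irr}(G)}\chi(1)^2=|G|$. Split $\mathrm{Irr}(G)$ into two parts:
\begin{enumerate}
\item the linear characters, of which there are exactly $|G:G'|$, since they are the characters of the abelian group $G/G'$;
\item the non-linear characters, of which there are $K(G)-|G:G'|$.
\end{enumerate}
Each degree $\chi(1)$ divides $|G|$. So a non-linear character has $\chi(1)>1$ and $\chi(1)$ is a divisor of $|G|$ greater than $1$. Hence $\chi(1)\geq p$, because $p$ is the smallest prime dividing $|G|$, and therefore $\chi(1)^2\geq p^2$.

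Feeding this into the degree sum gives
$$|G|\geq |G:G'|+p^2\bigl(K(G)-|G:G'|\bigr),$$
that is,
$$K(G)\leq |G:G'|+\dfrac{|G|-|G:G'|}{p^2}.$$
Dividing by $|G|$ yields
$$cp(G)\leq \dfrac{1}{|G'|}+\dfrac{1}{p^2}\left(1-\dfrac{1}{|G'|}\right)=\dfrac{1}{p^2}\left(1+\dfrac{p^2-1}{|G'|}\right).$$
Combining this with $\theta(G)\leq cp(G)$ gives the claim.

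There is no real obstacle. The one point that needs care is the justification that every non-linear irreducible character has degree at least $p$, which rests on the standard fact that $\chi(1)$ divides $|G|$. In the abelian case $|G'|=1$, and the bound reads $\theta(G)\leq 1$, which is trivially true; the argument above covers this case automatically.
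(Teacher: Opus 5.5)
Your proof is correct and takes the same route as the paper: reduce via $\theta(G)\leq cp(G)$, then apply the bound $cp(G)\leq \frac{1}{p^2}\left(1+\frac{p^2-1}{|G'|}\right)$ for $G\in\mathcal{G}_p$. The only difference is that the paper cites this commuting-probability bound from the literature, while you prove it yourself with the standard character-degree argument: $\sum_{\chi}\chi(1)^2=|G|$, together with the fact that each non-linear irreducible degree is a divisor of $|G|$ exceeding $1$, hence at least $p$.
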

\pf It follows from Theorem 24 \cite{sen} and the fact that $\theta(G)\leq cp(G)$.\qed 

\section{Classification of groups by specific values of $\theta(G)$}\label{specific_values_section}

It is clear that a group $G$ is cyclic if and only if $\theta(G)=1$. However, as with the case of commuting probability, we show that for non-cyclic groups the cyclic probability is bounded away from $1$. Moreover the maximal possible $\theta(G)$ for non-cyclic groups is only a first step. To gain a finer understanding, we investigate which groups attain particular probabilities, such as $5/8,1/2,7/16$ etc. This section provides a complete classification of finite groups whose cyclic probability equals these notable thresholds, revealing structural constraints that often force the group to be a direct product of a small non-cyclic factor with a cyclic group of coprime order.

\begin{lemma}\label{abelian-5/8-equality-lemma}
    If $G$ is a non-cyclic abelian $2$-group and $\theta(G)=5/8$, then $G\cong \mathbb{Z}_2\times \mathbb{Z}_2$.
\end{lemma}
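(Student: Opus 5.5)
Write $G\cong \mathbb{Z}_{2^{a_1}}\times\cdots\times\mathbb{Z}_{2^{a_k}}$ with $a_1\geq a_2\geq\cdots\geq a_k\geq 1$. Since $G$ is non-cyclic, $k\geq 2$. Let $|G|=2^n$ and $exp(G)=2^{a_1}$. The plan is to use Theorem \ref{exponent-upper-bound} to force $G$ to have a cyclic subgroup of index $2$, and then to handle that case by the explicit values in Table \ref{family}.

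\textbf{Step 1: bounding the index of the exponent.} By Theorem \ref{exponent-upper-bound},
$$\dfrac{5}{8}=\theta(G)\leq \dfrac{\psi(2^{a_1})}{2^n}=\dfrac{3\cdot 2^{a_1-1}}{2^n}=\dfrac{3}{2^{n-a_1+1}}.$$
Hence $2^{n-a_1+1}\leq 24/5<8$, so $n-a_1\leq 1$. Non-cyclicity excludes $n=a_1$. Therefore $n=a_1+1$, which gives $k=2$, $a_2=1$, and $G\cong \mathbb{Z}_{2^{n-1}}\times\mathbb{Z}_2$ with $n\geq 2$.

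\textbf{Step 2: evaluating $\theta$ on this family.} Table \ref{family} gives $\theta(\mathbb{Z}_{2^{n-1}}\times \mathbb{Z}_2)=\frac12+\frac{1}{2^{2n-1}}$. Setting this equal to $5/8$ yields $2^{2n-1}=8$, so $n=2$ and $G\cong\mathbb{Z}_2\times\mathbb{Z}_2$.

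To be safe, the table value can be verified directly from Proposition \ref{psi_connection_proposition}. The elements of order dividing $2^j$ number $2^{j+1}$ for $1\leq j\leq n-1$. So $c_1=1$, $c_2=3$, and $c_{2^j}=2^j$ for $2\leq j\leq n-1$. Then
$$|S_G| = 1+3\cdot 3+\sum_{j=2}^{n-1}2^j\cdot 3\cdot 2^{j-1}.$$
This is a geometric sum, which simplifies to $2^{2n-1}+2$, matching the table.

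\textbf{Main obstacle.} The only real step is Step 1: noticing that the exponent bound is sharp enough to reduce to the index-$2$ case, which relies on $5/8>3/8$. After that reduction, the rest is routine computation.
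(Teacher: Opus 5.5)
Your proof is correct, but it takes a different route from the paper. The paper bounds the \emph{rank} rather than the index of a cyclic subgroup. It passes to the Frattini quotient $G/\Phi(G)\cong\mathbb{Z}_2^k$ and uses $\theta(G)\le\theta(G/\Phi(G))=\frac{3}{2^k}-\frac{2}{4^k}$ to force $k=2$. It then invokes a two-parameter formula for $\theta(\mathbb{Z}_{2^a}\times\mathbb{Z}_{2^b})$ and leaves it as ``easily checked'' that $a=b=1$ is the only solution of $\theta=5/8$. You instead apply Theorem~\ref{exponent-upper-bound}, which (because $5/8>3/8$) gives $|G|\le 2\exp(G)$, hence $|G|=2\exp(G)$ by non-cyclicity. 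In the abelian setting this pins down $k=2$ and $a_2=1$ simultaneously, so only the one-parameter family $\mathbb{Z}_{2^{n-1}}\times\mathbb{Z}_2$ survives. Your explicit count $|S_G|=2^{2n-1}+2$ then finishes the argument with no unverified step. There is no circularity, since Theorem~\ref{exponent-upper-bound} rests only on Proposition~\ref{psi_connection_proposition}; indeed your reduction is the same device the paper itself uses in Theorems~\ref{smallest-prime-theorem} and~\ref{CycProb>1/2}. What the paper's Frattini step buys is a slightly wider range of applicability: it forces rank $2$ for any value above $11/32$, whereas the exponent bound needs the value to exceed $3/8$. However, it yields less structural information. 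Conversely, your index-$2$ reduction would carry over to non-abelian $2$-groups via the classification of $2$-groups with a cyclic maximal subgroup, whereas knowing the rank is $2$ leaves a far larger family to handle.
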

\pf Since $G$ is a finite abelian $2$-group, we can decompose it into a direct product of cyclic $2$-groups:
$$ G \cong \mathbb{Z}_{2^{a_1}} \times \mathbb{Z}_{2^{a_2}} \times \dots \times \mathbb{Z}_{2^{a_k}} $$
where $a_1 \ge a_2 \ge \dots \ge a_k \ge 1$. Since $G$ is non-cyclic, it must have at least two generators, so $k \ge 2$. Note that
$G / \Phi(G) \cong (\mathbb{Z}_2)^k$. By the quotient inequality, $\theta(G) \le \theta(G/\Phi(G))$. In $\mathbb{Z}^k_2$, a pair of elements generates a cyclic group if and only if they are linearly dependent. Out of the $2^{2k}$ total pairs, the number of linearly dependent pairs is $2^k + 2(2^k - 1) = 3 \cdot 2^k - 2$. Thus:
$$\theta((\mathbb{Z}_2)^k) = \frac{3 \cdot 2^k - 2}{2^{2k}} = \frac{3}{2^k} - \frac{2}{4^k}.$$
The function $\frac{3}{2^k} - \frac{2}{4^k}$ is strictly decreasing for $k \ge 2$. Therefore, any group with $k \ge 3$ generators will have a cyclic probability strictly less than $\frac{11}{32}$, which is less than $\frac{5}{8}$. Since we are given $\theta(G) = \frac{5}{8}$, $G$ must be generated by exactly $k=2$ elements. Hence, $G \cong \mathbb{Z}_{2^a} \times \mathbb{Z}_{2^b}$ with $a \ge b \ge 1$. Thus 
$$\dfrac{5}{8}=\theta(G)=\theta(\mathbb{Z}_{2^a} \times \mathbb{Z}_{2^b})=\dfrac{2^{2a+b}+\frac{2}{7}(2^{3b}-1)}{2^{2a+2b}}.$$ It can be easily checked that this holds only when $a =b= 1$, i.e., Therefore, $G \cong \mathbb{Z}_2 \times \mathbb{Z}_2$. \qed

\begin{theorem}\label{5/8-bound-theorem}
Let $G$ be a finite non-cyclic group. Then $\theta(G)\leq \frac{5}{8}$. Moreover, equality holds if and only if $G\cong Q_8\times \mathbb{Z}_m$ or $(\mathbb{Z}_2\times \mathbb{Z}_2)\times \mathbb{Z}_m$ where $m$ is an odd positive integer $\geq 1$.
\end{theorem}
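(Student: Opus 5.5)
The plan is to split according to whether $G$ is abelian, use the commuting probability in the non-abelian case, and reduce to a Sylow $2$-subgroup in the abelian case.

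\emph{Non-abelian case.} Here $\theta(G)\leq cp(G)\leq 5/8$ by Gustafson's bound. If $\theta(G)=5/8$, then $cp(G)=5/8$, so $G/Z(G)\cong \mathbb{Z}_2\times\mathbb{Z}_2$ and $G$ is nilpotent. We also need $\theta(G)=cp(G)$, so by the proposition relating $cp$ and $\theta$, every Sylow subgroup of $G$ is cyclic or generalized quaternion. Write $G=G_2\times H$ with $|H|$ odd. Since $G$ is nilpotent, $H$ is a product of cyclic groups of coprime orders, hence cyclic, and by Proposition \ref{quotient-direct-product-prop} we get $\theta(G)=\theta(G_2)\theta(H)=\theta(G_2)$. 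As $G$ is non-abelian, $G_2$ is non-abelian, so $G_2\cong Q_{2^n}$. From Table \ref{family}, $\theta(Q_{2^n})=\frac14+\frac{3}{2^n}=\frac58$ forces $n=3$, so $G\cong Q_8\times\mathbb{Z}_m$ with $m$ odd.

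\emph{Abelian non-cyclic case.} Write $G=G_2\times H$ with $|H|$ odd, so that $\theta(G)=\theta(G_2)\theta(H)$. Some Sylow $q$-subgroup $Q$ is non-cyclic. Then $G$ maps onto $Q$, and $Q$ maps onto $Q/\Phi(Q)\cong\mathbb{Z}_q^k$ with $k\geq2$. By the quotient inequality and the table value $\theta(\mathbb{Z}_q^k)=\frac{q^k+q^{k-1}-1}{q^{2k-1}}$, which is decreasing in $k$, we get
$$\theta(G)\leq \frac{q^2+q-1}{q^3}.$$
This bound is at most $5/8$, with equality only when $q=2$; for $q\geq3$ it is at most $11/27<5/8$.

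Now suppose $\theta(G)=5/8$. Then $q=2$, so $G_2$ is non-cyclic. Since $\theta(G_2)\leq 5/8$ by the bound just proved and $\theta(H)\leq 1$, the product $\theta(G_2)\theta(H)=5/8$ forces $\theta(G_2)=5/8$ and $\theta(H)=1$. Hence $H$ is cyclic, and Lemma \ref{abelian-5/8-equality-lemma} gives $G_2\cong\mathbb{Z}_2\times\mathbb{Z}_2$.

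\emph{Converse.} For the listed groups, coprimality and multiplicativity give $\theta=\theta(Q_8)=\frac14+\frac38=\frac58$ and $\theta(\mathbb{Z}_2^2)=\frac{4+2-1}{8}=\frac58$.

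The main obstacle is the non-abelian equality case. Gustafson's equality characterization alone only gives $G$ isoclinic to $D_4$ or $Q_8$, and that class is large. The key step is to combine it with the requirement $\theta=cp$, through the cyclic/generalized-quaternion Sylow criterion, to pin $G_2$ down to $Q_8$. One must also check that nilpotency, via $G/Z(G)$ being a $2$-group, really makes $G$ the direct product of its Sylow subgroups, so that the multiplicativity in Proposition \ref{quotient-direct-product-prop} applies.
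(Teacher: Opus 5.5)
Your proof is correct and follows essentially the same route as the paper: in the non-abelian case you use Gustafson's bound and its equality case $G/Z(G)\cong\mathbb{Z}_2\times\mathbb{Z}_2$, and in the abelian case you reduce to a non-cyclic Sylow subgroup, pass to its Frattini quotient $\mathbb{Z}_q^k$, and finish with Lemma \ref{abelian-5/8-equality-lemma}. The differences are cosmetic. You identify $G_2\cong Q_8$ through the paper's cyclic/generalized-quaternion Sylow criterion and the value $\theta(Q_{2^n})=\frac14+\frac{3}{2^n}$, whereas the paper uses Conrad's theorem and $cp(G_2)=5/8$; you also check the converse explicitly, which the paper leaves implicit.
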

\pf If $G$ is non-abelian, then we have $\theta(G)\leq cp(G)\leq 5/8$. So, we consider only the case when $G$ is an abelian non-cyclic group. 

As $G$ is abelian, it can be expressed as a direct product of its Sylow subgroups and hence $$\theta(G)=\prod_{p\mid |G|}\theta(G_p),$$ where $G_p$ denotes the Sylow $p$-subgroup of $G$. As $G$ is non-cyclic, at least one of the $G_p$ is non-cyclic and $G_p/\Phi(G_p)\cong \mathbb{Z}^k_p$ with $k\geq 2$. So, we have $$\theta(G)\leq \theta(G_p)\leq \theta(G_p/\Phi(G_p))\leq \frac{p^2+p-1}{p^3}\leq \frac{5}{8}.$$

In case of abelian non-cyclic groups, for equality we need every odd Sylow subgroup to be cyclic and the Sylow $2$-subgroup $G_2$ must be non-cyclic abelian with $\theta(G_2)=5/8$. Thus by Lemma \ref{abelian-5/8-equality-lemma}, $G_2\cong \mathbb{Z}_2\times \mathbb{Z}_2$ and $G\cong (\mathbb{Z}_2\times \mathbb{Z}_2)\times \mathbb{Z}_m$, where $m$ is an odd integer $\geq 1$.

In case of non-abelian groups, for equality we must have $\theta(G)=cp(G)=5/8$. From the second equality, it follows that \cite{sen}, $G/Z(G)\cong \mathbb{Z}_2\times \mathbb{Z}_2$. Thus $G$ is nilpotent and $G\cong G_2\times G_{odd}$ where $G_{odd}$ is the direct product of all odd Sylow subgroups of $G$. Moreover, as $G/Z(G)$ is group of order $4$, it follows that $G_{odd}$ is abelian and hence $G_2$ is non-abelian. As $\theta(G)=cp(G)$, every abelian subgroup of $G$ must be cyclic. In particular, $G_2$ is a non-abelian $2$-group whose abelian subgroups are all cyclic and hence by Theorem 4.10 \cite{conrad-generalized-quaternion}, $G_2\cong Q_{2^n}$. As $cp(G)=5/8$, we must have $cp(G_2)=5/8$. The only generalized quaternion group with a commuting probability of $5/8$ is $Q_8$. Again, as $G_{odd}$ is abelian and all its subgroups must be cyclic, $G_{odd}$ must be a cyclic group of odd order, $\mathbb{Z}_m$. Thus $G\cong Q_8\times \mathbb{Z}_m$. \qed

\begin{theorem}\label{CycProb=1/2}
    Let $G$ be a finite group with $\theta(G)=\frac{1}{2}$, then $cp(G)=\frac{1}{2}$ and $G\cong \mathbb{Z}_3\rtimes \mathbb{Z}_{2m}$, where $gcd(m,3)=1$.
\end{theorem}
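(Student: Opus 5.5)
We need to plan a proof that $\theta(G)=1/2$ forces $cp(G)=1/2$ and $G\cong \mathbb{Z}_3\rtimes\mathbb{Z}_{2m}$ with $\gcd(m,3)=1$.

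We first rule out the abelian case and odd order. If $G$ is abelian and non-cyclic, then as in the proof of Theorem \ref{5/8-bound-theorem}, $\theta(G)=\prod_q\theta(G_q)$. Each factor is a number of the form $|S_{G_q}|/|G_q|^2$, and the cyclic factors equal $1$. Some Sylow subgroup $G_q$ is non-cyclic, and by the argument of Lemma \ref{abelian-5/8-equality-lemma}, $\theta(G_q)\le\theta(\mathbb{Z}_q^2)$. We must show that no product can equal exactly $1/2$. The idea is that $\theta(G_q)=|S_{G_q}|/q^{2n}$ with $|S_{G_q}|=\sum_g\psi(o(g))\equiv 1 \pmod q$, since every nontrivial term $\psi(q^j)=q^{j-1}(q+1)$ is divisible by $q$ when $j\ge2$. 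For $j=1$ the contribution is $c_q(q+1)$, and $c_q\equiv -1\pmod q$, so the total is $\equiv 1-1=0$? This needs care. We will compute $|S_{G_q}|\bmod q$ via the first equality of Proposition \ref{psi_connection_proposition}, $\sum_C\mathcal{J}_2(|C|)$, and aim to show the reduced fraction has a power of $q$ in the denominator. Then $\theta=1/2$ forces $q=2$ and $|S_{G_2}|/|G_2|^2=1/2$. The formula for $\mathbb{Z}_{2^a}\times\mathbb{Z}_{2^b}$ gives $1/2+$(positive) or something strictly below, and $k\ge3$ gives $\le 11/32<1/2$. This excludes abelian $G$. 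If $|G|$ is odd, Theorem \ref{smallest-prime-theorem} gives $\theta(G)\le 11/27<1/2$, so $2\mid |G|$.

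Now let $G$ be non-abelian. Then $1/2=\theta(G)\le cp(G)$, and by the known classification of values of $cp$ (Rusin/Lescot), $cp(G)\in\{5/8,\ 1/2\}$ or $cp(G)$ lies in a known short list above $1/2$ such as $11/16, 17/32$ (I would quote: $cp(G)>1/2$ implies $G$ is isoclinic to $D_4$ with $cp=5/8$, or $cp(G)=1/2$ with $G'=\mathbb{Z}_3$ or $G$ isoclinic to $S_3$, or $cp=17/32$ or $17/27$\dots). Proposition \ref{G'-prop-1} with $p=2$ gives $1/2\le \frac14(1+3/|G'|)$, so $|G'|\le 3$. If $|G'|=2$, then $G$ is nilpotent of class $2$ with $G/Z(G)$ elementary abelian $2$-group, hence nilpotent. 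Then $\theta(G)=\prod\theta(G_q)$, and the $2$-part is a non-abelian $2$-group. By the congruence argument above, we need $\theta(G_2)=1/2$ with cyclic odd part. The main obstacle is excluding $2$-groups with $\theta=1/2$. Here I would use $\theta(G_2)\le\theta(G_2/\Phi)$ together with Theorem \ref{exponent-upper-bound}: $\theta\le \psi(\exp)/|G|=\tfrac32\cdot 2^{e-n}$, forcing $n-e\le1$. Thus $G_2$ has a cyclic maximal subgroup and is one of $\mathbb{Z}_{2^{n-1}}\times\mathbb{Z}_2$, $D$, $Q$, $SD$, or $M$, and Table \ref{family} shows none has $\theta$ exactly $1/2$.

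So $|G'|=3$, and equality in Proposition \ref{G'-prop-1} forces $cp(G)=\theta(G)=1/2$. By the first Proposition of Section \ref{general_bounds_section}, every Sylow subgroup is cyclic or generalized quaternion. Also $cp(G)=1/2$ with $|G'|=3$ gives $G'\cong\mathbb{Z}_3$ and $G/Z(G)\cong S_3$ (equality case of Theorem 24 of \cite{sen}). Then the Sylow $3$-subgroup $P$ is cyclic and contains $G'$, and the centralizer structure forces $G=G'\rtimes H$ with $H$ cyclic of order $2m$ acting by inversion. Quaternion Sylow $2$-subgroups are excluded because the quotient by the center is $S_3$: a $Q_{2^n}$ with $n\ge3$ would contribute non-central $2$-elements of order $\ge4$ and lower $\theta$. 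This needs a check, possibly by computing $\sum\psi(o(g))$ directly. Finally, $\gcd(m,3)=1$ is needed because otherwise $P$ is a cyclic group of order $9$ or more acted on nontrivially, and then $G'$ would not have order $3$, or $P$ is non-cyclic. A direct count for $\mathbb{Z}_3\rtimes\mathbb{Z}_{2m}$ confirms $\theta=1/2$.
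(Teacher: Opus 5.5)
Your route to $cp(G)=\frac12$ differs from the paper's, and it works once repaired. For $|G|$ even, the bound $cp(G)\le\frac14\bigl(1+\frac{3}{|G'|}\bigr)$ forces $|G'|\le 3$. If $|G'|\le 2$, then $G$ is nilpotent, and the exponent bound, the $2$-groups with a cyclic maximal subgroup and the table of families exclude $\theta=\frac12$. If $|G'|=3$, the squeeze $\frac12=\theta(G)\le cp(G)\le\frac12$ gives $cp(G)=\frac12$.

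The paper instead uses Lescot ($cp(G)>\frac12\Rightarrow G$ nilpotent) and disposes of the nilpotent case arithmetically. Since $|S_{G_{odd}}|$ is odd, $|S_{G_2}|=2^{2n-1}$; but $|S_{G_2}|\equiv 1\pmod 3$ while $2^{2n-1}\equiv -1\pmod 3$. This needs no classification of $2$-groups.

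Your own congruence is taken modulo the wrong prime and leads nowhere. For any nontrivial $q$-group, $|S_{G_q}|\equiv 1+c_q\equiv 0\pmod q$, since $c_q\equiv -1\pmod q$. Replace it by the bound $\theta(G_{odd})\le\frac{11}{27}<\frac12$ when $G_{odd}$ is non-cyclic. That gives $G_{odd}$ cyclic and $\theta(G_2)=\frac12$, both in the abelian case and in the case $|G'|=2$. The list of $cp$-values you would quote is unused and contains impossible entries for non-abelian groups, such as $\frac{11}{16}$ and $\frac{17}{27}$, both exceeding $\frac58$; drop it.

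The genuine gap is in the structural half. Excluding a generalized quaternion Sylow $2$-subgroup because it would ``lower $\theta$'' cannot work. Once all Sylow subgroups are cyclic or generalized quaternion, the proposition characterizing $cp(G)=\theta(G)$ gives $\theta(G)=cp(G)=\frac12$ anyway, so no evaluation of $\sum_g\psi(o(g))$ will produce a contradiction. The exclusion must be structural, as in the paper. Since $G/Z(G)\cong S_3$, the image of $G_2$ has order $2$, so $G_2\cap Z(G)$ has index $2$ in $G_2$ and lies in $Z(G_2)$. Hence $G_2/Z(G_2)$ is cyclic, so $G_2$ is abelian and therefore cyclic.

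Likewise, $|P|=3$ is asserted rather than proved, and ``acted on nontrivially'' presupposes $P\lhd G$, which you have not shown. The clean argument runs as follows. Since $|G|=6|Z(G)|$, $9\mid|G|$ would give a central subgroup of order $3$. It and $G'$ are both normal $3$-subgroups of $G$ of order $3$, so both lie in a cyclic Sylow $3$-subgroup and hence coincide. That puts $G'\le Z(G)$ and makes $G/Z(G)$ abelian, a contradiction. Only then is $G'$ a normal Sylow subgroup. Schur--Zassenhaus then gives a complement $H\cong G/G'$, which is abelian with cyclic Sylow subgroups, hence cyclic of order $2m$ with $\gcd(m,3)=1$.

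For $G/Z(G)\cong S_3$ and $G'\cong\mathbb{Z}_3$ themselves, cite Lescot's characterization of $cp(G)=\frac12$ as isoclinism with $S_3$, as the paper does. Do not rely on an equality case of the $|G'|$-bound: equality there only says that every nonlinear irreducible character has degree $2$.
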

\pf As $\theta(G)\leq cp(G)$, it suffices to show that $cp(G)\not> \frac{1}{2}$. Suppose, $cp(G)>\frac{1}{2}$, then by \cite{lescot}, $G$ is nilpotent and $G\cong G_2\times G_{odd}$, where $G_2$ is the Sylow $2$-subgroup and $G_{odd}$ is the product of the normal odd Sylow subgroups of $G$. Thus if $|G_2|=2^n$, we have $$\theta(G)=\theta(G_2)\cdot\theta(G_{odd})=\dfrac{|S_2|}{2^{2n}}\cdot \dfrac{|S_{odd}|}{|G_{odd}|^2}=\dfrac{1}{2},$$ i.e., 
\begin{equation}\label{odd-equation}
    |S_2|\cdot |S_{odd}|=2^{2n-1}\cdot |G_{odd}|^2
\end{equation}
{\it Claim 1:} $|S_{odd}|$ is odd.\\
{\it Proof of Claim 1:} For any non-identity element $g\in G_{odd}$, let $o(g)=\Pi_i p^{k_i}_i$ where $p_i$'s are odd primes. Then $\psi(o(g))=\Pi_i p^{k_i-1}_i(p_i+1)$ is even. Thus $$|S_{odd}|=\sum_{g \in G_{odd}} \psi(o(g))=\psi(1)+\sum_{g\neq e}\psi(o(g))=1+even=odd.$$
Thus from Equation \ref{odd-equation}, $|S_2|=2^{2n-1}\cdot m$ where $m$ is an odd integer. However as $|S_2|\leq 2^{2n}$, we have $m=1$ and $|S_2|=2^{2n-1}$.

{\it Claim 2:} $|S_2|\equiv 1\pmod 3$.\\
{\it Proof of Claim 2:} For any non-identity element in $G_2$, its order is $2^j$ for some $j \ge 1$ and $$\psi(2^j) = 2^j\left(1 + \frac{1}{2}\right) = 3 \cdot 2^{j-1} \mbox{ is a multiple of }3.$$ Thus $$|S_2|=\psi(1)+\sum_{g\in G_2\setminus \{e\}} \psi(o(g))=1+3m, \mbox{ where }m\geq 1.$$
 Now, as $2\equiv -1\pmod 3$, we have $|S_2|=2^{2n-1}\equiv (-1)^{2n-1}\equiv -1\pmod 3$, which contradicts Claim 2 and hence $cp(G)=\frac{1}{2}$.
 
 As $cp(G)=1/2$, by Corollary 3.2 \cite{lescot}, $G$ must be isoclinic to $S_3$. Thus $G/Z(G)\cong S_3$ and $G'\cong \mathbb{Z}_3$. Again, as $\theta(G)=cp(G)=1/2$, every abelian subgroup of $G$ must be cyclic. Thus $G$ does not contain any subgroup isomorphic to $\mathbb{Z}_p \times \mathbb{Z}_p$, where $p$ is a prime and hence $G$ contains a unique subgroup of order $p$ for each prime divisor $p$ of $|G|$. Thus all odd Sylow subgroups of $G$ are cyclic and Sylow $2$-subgroup is either cyclic or generalized quaternion.

{\it Claim 3:} Sylow $2$-subgroup of $G$ is cyclic.\\
{\it Proof of Claim 3:} Let $G_2$ be a Sylow $2$-subgroup of $G$. As $G/Z(G)\cong S_3$, we have $[G_2:G_2\cap Z(G)]=2$. Note that $G_2\cap Z(G)\subseteq Z(G_2)$. Thus either $G_2$ is abelian or $G_2/Z(G_2)$ is cyclic of order $2$, thereby making $G_2$ abelian. Hence $G_2$ is not generalized quaternion and $G_2$ is cyclic.

{\it Claim 4:} Sylow $3$-subgroup of $G$ is cyclic of order $3$ and normal in $G$.\\
{\it Proof of Claim 4:} As $G/Z(G)\cong S_3$, we have $|G|=6\cdot |Z(G)|$. If $3\mid |Z(G)|$, then there exists $z\in Z(G)$ such that $o(z)=3$. Let $H=\langle z \rangle$. Thus $G'\cong H\cong \mathbb{Z}_3$. As $G$ contains unique subgroup of order $3$, we must have $G'=H$, i.e., $G'\leq Z(G)$. But this implies $G/Z(G)$ is abelian, a contradiction. Thus $3\nmid |Z(G)|$ and hence $9\nmid |G|$, i.e., Sylow $3$-subgroup of $G$ is cyclic of order $3$. Moreover as $G'\cong \mathbb{Z}_3$, the Sylow $3$-subgroup of $G$ is $G'$ and hence normal in $G$.

Thus by Schur-Zassenahus Theorem, $G\cong \mathbb{Z}_3\rtimes \mathbb{Z}_{2m}$ where $gcd(3,m)=1$. (Note that the semidirect product is non-trivial, as otherwise $G$ would be abelian and $cp(G)=1$.) \qed 

\begin{remark}
     $cp(G)=1/2$ does not imply $\theta(G)=1/2$, e.g., $cp(\mathbb{Z}_3\times S_3)=1/2$, but $\theta(\mathbb{Z}_3\times S_3)=19/54$.
\end{remark}


\begin{theorem}\label{CycProb>1/2}
    If $G$ is non-cyclic and $\theta(G)>1/2$, then $G$ is nilpotent and $G \cong Q_8\times \mathbb{Z}_m$ or $M_{2^k}\times \mathbb{Z}_m$ or $\mathbb{Z}_{2^{k-1}}\times \mathbb{Z}_2 \times \mathbb{Z}_m$, where $m$ is odd and $$\theta(G)=\frac{1}{2}+\frac{1}{2^{2s+1}}, \mbox{ for some }s \in \mathbb{N}.$$
\end{theorem}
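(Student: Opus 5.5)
The plan is to reduce to $2$-groups with a cyclic maximal subgroup, and then read off the possibilities from Table \ref{family}.

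\textbf{Nilpotency.} Since $\theta(G)\leq cp(G)$, the hypothesis gives $cp(G)>1/2$. By the result of \cite{lescot} already used in the proof of Theorem \ref{CycProb=1/2}, $G$ is nilpotent. Hence $G\cong G_2\times G_{odd}$. By Proposition \ref{quotient-direct-product-prop}, since the orders are coprime,
$$\theta(G)=\theta(G_2)\cdot\theta(G_{odd}).$$

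\textbf{The odd part is cyclic.} Suppose some odd Sylow subgroup $G_q$ is non-cyclic. Then Theorem \ref{smallest-prime-theorem} applies to $G_q$, whose smallest prime is $q\geq 3$. Together with the monotonicity of $x\mapsto (x^2+x-1)/x^3$, this gives
$$\theta(G)\leq\theta(G_q)\leq \frac{q^2+q-1}{q^3}\leq\frac{11}{27}<\frac12,$$
a contradiction. So every odd Sylow subgroup is cyclic. Since $G_{odd}$ is nilpotent, it is cyclic, say $G_{odd}\cong\mathbb{Z}_m$ with $m$ odd. Then $\theta(G)=\theta(G_2)$. Because $G$ is non-cyclic, $G_2$ must be non-cyclic.

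\textbf{Structure of the Sylow $2$-subgroup.} Let $|G_2|=2^n$ and $exp(G_2)=2^k$. Theorem \ref{exponent-upper-bound} gives
$$\frac12<\theta(G_2)\leq\frac{\psi(2^k)}{2^n}=\frac{3\cdot 2^{k-1}}{2^n}.$$
This forces $2^{n-k}<3$, so $n-k\leq 1$. Since $G_2$ is non-cyclic, $n=k+1$. Thus $G_2$ is a non-cyclic $2$-group of order $2^n$ with a cyclic subgroup of index $2$. By the classical classification (Theorem 14.9, \cite{huppert}), $G_2$ is one of
$$\mathbb{Z}_{2^{n-1}}\times\mathbb{Z}_2,\quad D_{2^{n-1}},\quad Q_{2^n},\quad SD_{2^n},\quad M_{2^n}.$$
Here $\mathbb{Z}_2\times\mathbb{Z}_2$ and $D_4$ occur as the small cases.

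\textbf{Checking the list against Table \ref{family}.}
\begin{itemize}
\item Dihedral groups: $\theta=\frac14+\frac{3}{2^{n}}\leq\frac14+\frac{3}{16}<\frac12$, so they are excluded.
\item Semidihedral groups ($n\geq 4$): $\theta=\frac14+\frac{9}{2^{n+2}}<\frac12$, so they are excluded.
\item Generalized quaternion groups: $\theta=\frac14+\frac{3}{2^n}>\frac12$ only for $n=3$. This leaves $Q_8$, with $\theta=5/8=\frac12+\frac1{2^{3}}$, i.e. $s=1$.
\item $\mathbb{Z}_{2^{n-1}}\times\mathbb{Z}_2$ ($n\geq 2$) and $M_{2^n}$ ($n\geq 4$): $\theta=\frac12+\frac1{2^{2n-1}}$, i.e. $s=n-1$.
\end{itemize}
So $G\cong G_2\times\mathbb{Z}_m$ has one of the three stated forms, and $\theta(G)=\frac12+\frac1{2^{2s+1}}$ for some $s\in\mathbb{N}$.

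\textbf{Main obstacle.} The delicate step is the reliance on Table \ref{family}. Its entries must be confirmed via Proposition \ref{psi_connection_proposition} by counting elements of each order in each family. In particular, one needs that $M_{2^n}$ has the same order profile as $\mathbb{Z}_{2^{n-1}}\times\mathbb{Z}_2$. One also has to take care that the small cases of the classification ($n=2,3$) are handled correctly.
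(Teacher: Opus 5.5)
Your proposal is correct and follows essentially the same route as the paper's proof: nilpotency from $cp(G)>1/2$, ruling out non-cyclic odd Sylow subgroups via the $11/27$ bound, the exponent bound forcing a cyclic subgroup of index $2$ in $G_2$, then Huppert's classification and Table \ref{family}. You also handle the order-$4$ case $\mathbb{Z}_2\times\mathbb{Z}_2$, which the paper's case split skips, and your only slip is that the dihedral group of order $2^n$ (i.e.\ $D_{2^{n-1}}$) has $\theta=\frac14+\frac{3}{2^{n+1}}$, not $\frac14+\frac{3}{2^n}$; your bound $\leq\frac14+\frac{3}{16}=\theta(D_4)<\frac12$ is still right, so the exclusion stands.
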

\pf It was shown in \cite{machale}, that $cp(G)>1/2$ implies that $G$ is nilpotent and $cp(G)=\frac{1}{2}+\frac{1}{2^{2s+1}}$ for some positive integer $s$. As $1/2<\theta(G)\leq cp(G)$, it follows that $G$ is nilpotent. 


Let $G=G_2\times G_{odd}$ where $G_2$ is the normal Sylow $2$-subgroup of $G$ and $G_{odd}$ is the internal direct product of normal odd Sylow subgroups of $G$. 

{\it Claim:} $G_{odd}$ is cyclic.\\
{\it Proof of Claim:} If $G_{odd}$ is non-cyclic, from the upper bound in Theorem \ref{smallest-prime-theorem}, it follows that $\theta(G_{odd})\leq \frac{3^2+3-1}{3^3}=\frac{11}{27}$. However this implies $$\theta(G)=\theta(G_2)\cdot\theta(G_{odd})\leq 1\cdot \dfrac{11}{27}<\dfrac{1}{2}, \mbox{ a contradiction.}$$ 

As $G_{odd}$ is cyclic and $G$ is non-cyclic, it follows that $G_2$ is non-cyclic. 




Let $exp(G_2)=2^k$. Then $$\dfrac{1}{2}<\theta(G)=\theta(G_2)\leq \dfrac{\psi(2^k)}{|G_2|}=\dfrac{3\cdot 2^{k-1}}{|G_2|},$$ i.e., $|G_2|<3\cdot 2^k$. Again, as $G_2$ is non-cyclic, $|G_2|\geq 2^{k+1}$. Thus $|G_2|=2^{k+1}$, i.e., $G_2$ is a $2$-group of order $2^{k+1}$ with a cyclic maximal subgroup of order $2^k$. Hence by Theorem 14.9(b) (pg. 96, \cite{huppert}), if $k=2$, then $G_2\cong \mathbb{Z}_4\times \mathbb{Z}_2,D_4,Q_8$ and if $k\geq 3$, $G_2$ is isomorphic to either $\mathbb{Z}_{2^{k-1}}\times \mathbb{Z}_2$, or dihedral group $D_{2^{k}}$ or generalized quaternion group $Q_{2^{k+1}}$ or semi-dihedral group $SD(2^{k+1})$ or modular maximal-cyclic group $M_{2^k}$. However, from Table \ref{family}, it follows that only those cyclic probability $>1/2$ are $Q_8,M_{2^k}$ and $\mathbb{Z}_{2^{k-1}}\times \mathbb{Z}_2$ and all of them have cyclic probability as given in the statement of the theorem.


\begin{lemma}\label{no-group-(7/16,1/2)}
    There does not exist any finite group $G$ with $7/16<\theta(G)<1/2$.
\end{lemma}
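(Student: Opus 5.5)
The approach is to use $\theta(G)\leq cp(G)$ to force $cp(G)$ to be large, and then split according to whether $G$ is nilpotent. Suppose, for a contradiction, that $7/16<\theta(G)<1/2$. Then $cp(G)>7/16$, and $G$ is non-cyclic and non-abelian-cyclic in the relevant sense. I would first invoke the known classification of commuting probabilities above $11/32$ (Rusin \cite{rusin}, also Lescot \cite{lescot}). Its possible values are
\[
1,\quad \tfrac12+\tfrac1{2^{2s+1}},\quad \tfrac12,\quad \tfrac7{16},\quad \tfrac{11}{27},\ \dots
\]
In particular, no value lies strictly between $7/16$ and $1/2$. This step is quoted from the literature, not proved here, and I am relying on the cited classification being exactly this list. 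It follows that $cp(G)\geq 1/2$. Since $\theta(G)<1/2$, the case $cp(G)=\theta(G)=1/2$ cannot occur. We are left with two cases: $cp(G)>1/2$, or $cp(G)=1/2$ with $\theta(G)<cp(G)$.

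\emph{Case $cp(G)>1/2$.} By \cite{lescot}, $G$ is nilpotent, so $G=G_2\times G_{odd}$ and $\theta(G)=\theta(G_2)\theta(G_{odd})$. If $G_{odd}$ were non-cyclic, Theorem \ref{smallest-prime-theorem} would give $\theta(G)\leq 11/27<7/16$. Hence $G_{odd}$ is cyclic, $G_2$ is non-cyclic, and $\theta(G)\leq\theta(G_2)$.

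Let $exp(G_2)=2^k$. If $|G_2|\geq 2^{k+2}$, then Theorem \ref{exponent-upper-bound} gives
\[
\theta(G_2)\leq \frac{3\cdot 2^{k-1}}{2^{k+2}}=\frac38<\frac7{16}.
\]
So $|G_2|=2^{k+1}$, and $G_2$ has a cyclic maximal subgroup. By Theorem 14.9(b) of \cite{huppert}, $G_2$ is one of $\mathbb{Z}_{2^{k-1}}\times\mathbb{Z}_2$, $M_{2^k}$, $Q_8$, $D_{2^k}$, $Q_{2^{k+1}}$, $SD_{2^{k+1}}$. Reading off Table \ref{family}:
\begin{itemize}
\item the first three have $\theta(G_2)>1/2$, and then $G_{odd}$ must be trivial, because any nontrivial cyclic odd factor multiplies $\theta$ by at most $\theta(\mathbb{Z}_q\times\mathbb{Z}_q)$-type factors, each $\leq 11/27$;
\item in fact, more directly, by Theorem \ref{CycProb>1/2} these give $\theta(G)>1/2$, which is excluded;
\item the dihedral, generalized quaternion and semidihedral groups have $\theta\leq 1/4+3/16=7/16$.
\end{itemize}
Either way, no value falls in the open interval $(7/16,1/2)$.

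Here I am taking for granted that a cyclic $G_{odd}$ forces $\theta(G)=\theta(G_2)$, which holds by Proposition \ref{quotient-direct-product-prop} because the orders are coprime and $\theta$ of a cyclic group is $1$.

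\emph{Case $cp(G)=1/2$ and $\theta(G)<1/2$.} By Corollary 3.2 of \cite{lescot}, $G$ is isoclinic to $S_3$, so $G/Z\cong S_3$ with $Z=Z(G)$. The commuting pairs are exactly the pairs lying in a common maximal abelian subgroup. These subgroups are:
\begin{itemize}
\item $A_0$, the preimage of $A_3$, of index $2$;
\item $A_1,A_2,A_3$, the preimages of the three subgroups of order $2$, each of index $3$.
\end{itemize}
Any two of them intersect in $Z$. Since $\theta(G)<cp(G)$, some $A_i$ is non-cyclic. The $A_1,A_2,A_3$ are conjugate, so either $A_0$ is non-cyclic or all of $A_1,A_2,A_3$ are.

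A non-cyclic abelian subgroup $A$ has at least $\tfrac38|A|^2$ non-cyclic pairs, by Theorem \ref{5/8-bound-theorem}. Inclusion–exclusion over the $A_i$ is harmless here: if $Z$ is non-cyclic, every $A_i$ is non-cyclic, and the count only grows. This gives
\[
cp(G)-\theta(G)\geq \min\left\{\tfrac38\cdot\tfrac14,\ 3\cdot\tfrac38\cdot\tfrac19\right\}=\tfrac3{32}.
\]
Hence $\theta(G)\leq \tfrac12-\tfrac3{32}<\tfrac7{16}$, a contradiction.

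I expect the main obstacle to be this last case. The counting must be done carefully: the non-cyclic pairs inside overlapping $A_i$ should not be double-counted, and I must check that they genuinely outnumber the $1/16$ gap. If the overlap over $Z$ causes trouble, I would instead count only the pairs in $A_i\setminus Z$, or pass to the quotient bound $\theta(A)\leq\theta(A/\Phi(A))$.
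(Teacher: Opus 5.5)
Your case $cp(G)>1/2$ is the paper's argument: nilpotency, $G_{odd}$ cyclic via the $11/27$ bound, the exponent bound forcing $|G_2|=2^{k+1}$, then Huppert's list and Table~\ref{family}. The one difference is that you use $\theta(G)=\theta(G_2)$ directly rather than a minimal counterexample, and that is fine. Delete your first bullet there, because it is wrong: a cyclic factor of coprime order multiplies $\theta$ by exactly $1$, not by something $\leq 11/27$. Your second bullet already gives the correct reason.

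Your case $cp(G)=1/2$ takes a genuinely different route.
\begin{itemize}
\item The paper works with a minimal counterexample and pins down its structure: $9\nmid|G|$, the odd Sylow subgroups are cyclic, $G_2\cong\mathbb{Z}_{2^k}$ or $\mathbb{Z}_{2^k}\times\mathbb{Z}_2$, and $G\cong H\times K$. It then computes $\theta(K)$ for the explicit semidirect products.
\item You write $L(G)=\bigcup_{i=0}^{3}A_i\times A_i$ and $S_G=\bigcup_i S_{A_i}$, and use the $5/8$ bound for non-cyclic abelian groups to bound $cp(G)-\theta(G)$ from below.
\end{itemize}
Your argument needs no minimality and no enumeration. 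It proves a clean dichotomy for every $G$ with $G/Z(G)\cong S_3$: either all $A_i$ are cyclic and $\theta(G)=1/2$, or $\theta(G)<7/16$. In exchange, the paper's route gives explicit structural information of the kind needed to classify $\theta(G)=7/16$.

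One step is wrong as written. Overlaps can only shrink a union, so the remark that the count ``only grows'' is false. The value $\frac18$, and hence $\frac{3}{32}$, is therefore not justified when $A_1,A_2,A_3$ are the non-cyclic subgroups. Let $N_i$ and $N_Z$ denote the non-cyclic pairs in $A_i\times A_i$ and in $Z\times Z$. Since $A_i\cap A_j=Z$ for $i\neq j$, you have $N_i\cap N_j=N_Z$, and inclusion--exclusion gives
\[
|N_1\cup N_2\cup N_3|=\sum_{i=1}^{3}|N_i|-2|N_Z|\geq \frac{1}{8}|G|^2-\frac{2}{36}|G|^2=\frac{5}{72}|G|^2 .
\]
Since $\frac{5}{72}>\frac{1}{16}$, this gives $\theta(G)\leq\frac{31}{72}<\frac{7}{16}$, and the argument closes. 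The $A_0$ case, giving $\theta(G)\leq\frac{13}{32}$, is fine as you wrote it.

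Your proposed fallback of counting only pairs outside $Z\times Z$ is too lossy. It yields $3\left(\frac{1}{24}-\frac{1}{36}\right)=\frac{1}{24}<\frac{1}{16}$, so $N_Z$ must be counted exactly once.
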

\pf Let $\mathcal{A}=\{cp(G): G\mbox{ is a finite group}\}$. Then it can be shown that \cite{browning}, $\mathcal{A}\cap [7/16,1/2]=\{7/16,1/2\}$. Let, if possible, $G$ be the smallest group (in terms of order) satisfying the inequality. As $7/16<\theta(G)\leq cp(G)$, we have $cp(G)\geq 1/2$. 

Case 1: $cp(G)>1/2$. In this case, $G$ is nilpotent and $G\cong G_2\times G_{odd}$. If $G_{odd}$ is not cyclic, we have $\theta(G)\leq \theta(G_{odd})\leq \dfrac{11}{27}<\dfrac{7}{16}$, a contradiction. Thus $G_{odd}$ is cyclic and hence $\theta(G)=\theta(G_2)$. As $G$ is the minimum counterexample, we should have $G=G_2$, i.e., $G$ is a $2$-group. Let $exp(G)=2^k$. Then by Theorem \ref{exponent-upper-bound}, we have $$\dfrac{7}{16}<\theta(G)\leq \dfrac{\psi(2^k)}{|G|}.$$ This implies that $$|G|<2^{k+3}\cdot \dfrac{3}{7}<2^{k+3}\cdot \dfrac{1}{2}=2^{k+2}, \mbox{ i.e., }|G|=2^k \mbox{ or }2^{k+1}.$$ As $G$ is non-cyclic and $exp(G)=2^k$, we must have $|G|=2^{k+1}$. Thus $G$ is a non-cyclic group of order $2^{k+1}$ with a cyclic normal subgroup (as $exp(G)=2^k$) of order $2^k$. Hence by Theorem 14.9(b) (pg. 96, \cite{huppert}), if $k=2$, then $G\cong D_4,Q_8$ and if $k\geq 3$, $G$ is isomorphic to either dihedral $D_{2^{k}}$ or generalized quaternion $Q_{2^{k+1}}$ or semi-dihedral $SD(2^{k+1})$ or modular maximal-cyclic group $M_{2^k}$. As $\theta(D_4),\theta(Q_8) \not\in (7/16,1/2)$, they are ruled out. As $cp(D_{2^k})=\frac{1}{4}+\frac{3}{2^{k+1}}<\frac{1}{2}$, it is also ruled out. Similarly other possibilities can also be ruled out using the Table \ref{family}.

Note that if $cp(G)=1/2$ and $G$ is nilpotent, we can rule out the groups as above. 

So, we assume that:\\
Case 2: $cp(G)=1/2$ and $G$ is non-nilpotent: In this case, we have $G/Z(G)\cong S_3$ and $G'\cong \mathbb{Z}_3$. Moreover $cp(G)=1/2$ and $\theta(G)<1/2$. 

{\it Claim 1:} $9\nmid |G|$.\\
{\it Proof of Claim 1:} Let $|G|=2^n\cdot 3^m\cdot t$ where $gcd(t,6)=1$. Then $|Z(G)|=2^{n-1}\cdot 3^{m-1}\cdot t$. If $t>1$, then $G$ has a central subgroup $S$ of order $t$. Thus $7/16<\theta(G)\leq \theta(G/S)$ and hence by minimality of $G$, $\theta(G/S)\geq 1/2$. If $\theta(G/S)> 1/2$, then $G$ is nilpotent, a contradiction. Thus we have $\theta(G/S)=1/2$, i.e., $G/S\cong \mathbb{Z}_3\rtimes \mathbb{Z}_{2l}$ with $gcd(l,3)=1$. Thus $9\nmid |G|$. If $t=1$ and $2\mid |Z(G)|$, then let $S$ be a central subgroup of order $2$. Proceeding similarly, we get $\theta(G/S)=1/2$ and hence $G/S\cong \mathbb{Z}_3\rtimes \mathbb{Z}_{2l}$ with $gcd(l,3)=1$. Hence $9\nmid |G|$. If $t=1$ and $2\nmid |Z(G)|$, then $|Z(G)|=3^{m-1}$. If $|Z(G)|=1$, then $G\cong S_3$ and hence $\theta(G)=1/2$, a contradiction. If $3\mid |Z(G)|$, then we choose $S$ to be a central subgroup of order $3$ and get $\theta(G/S)=1/2$, i.e., $G/S\cong \mathbb{Z}_3\rtimes \mathbb{Z}_{2}$. Thus $|G|=18$ and it can be checked that no such group exist with $7/16<\theta(G)<1/2$. Thus the claim holds.

As $G'\cong \mathbb{Z}_3$, we must have $G_3=G'\cong \mathbb{Z}_3 \lhd G$. Thus $|G|=2^n\cdot 3\cdot t$, where $gcd(t,6)=1$ and $|Z(G)|=2^{n-1}\cdot t$.

{\it Claim 2:} All odd Sylow $p$-subgroups $G_p$ of $G$ are cyclic.\\
{\it Proof of Claim 2:} If $n>1$, take $S$ to be a central subgroup of order $2^{n-1}$. Then $G/S\cong \mathbb{Z}_3\rtimes \mathbb{Z}_{2t}$ which proves the claim. If $n=1$, then $|G|=6t$ and $|Z(G)|=t$. As $t=1$ implies $G\cong S_3$ and $\theta(G)=1/2$ a contradiction, we assume $t>1$. Since $gcd(t,6)=1$, by Schur-Zassenhaus theorem, $G\cong Z(G)\rtimes S_3\cong Z(G)\times S_3$. Thus $\theta(G)=\theta(Z(G))\cdot \theta(S_3)$, i.e., $\theta(Z(G))>7/8$, i.e, $Z(G)$ is cyclic and hence the claim holds.


Also, as $|G/Z(G)|$ has order $6$, all Sylow $p$-subgroups of $G$ with $p>3$ are central and hence normal in $G$.

Consider the map $\pi: G\rightarrow G/Z(G)$ given by $\pi(g)=gZ(G)$. As $\pi$ is a surjective homomorphism, $\pi(G_2)=G_2/Z(G)$ is a Sylow $2$-subgroup of $G/Z(G)\cong S_3$. Thus $|\pi(G_2)|=2$ and $$\pi(G_2)\cong \dfrac{G_2Z(G)}{Z(G)}\cong \dfrac{G_2}{G_2\cap Z(G)}$$  As $C=G_2\cap Z(G)\subseteq Z(G)\subseteq Z(G_2)$, is a central subgroup of index $2$ in $G_2$, $G_2$ must be abelian. As $G$ is non-nilpotent, $G_2$ must be non-normal in $G$.

Thus $G \cong G_{odd}\rtimes G_2$, where $G_{odd}$ is the internal direct product of normal odd Sylow subgroups of $G$. Hence $$\dfrac{7}{16}<\theta(G)\leq \theta(G/G_{odd})=\theta(G_2)\leq \dfrac{\psi(exp(G_2))}{|G_2|}.$$ 
If $exp(G_2)=2^k$, it can be shown, as in Case 1, that $|G_2|=2^k$ or $2^{k+1}$.
As $G_2$ is abelian, we must have $G_2\cong \mathbb{Z}_{2^k}\times \mathbb{Z}_2$ or $\mathbb{Z}_{2^k}$.

Let $H$ be the internal direct product of all Sylow $p$-subgroups for $p>3$ (if any) and $K=G_3\rtimes G_2\cong \mathbb{Z}_3\rtimes_\varphi (\mathbb{Z}_{2^k}\times \mathbb{Z}_2) \mbox{ or } \mathbb{Z}_3\rtimes \mathbb{Z}_{2^k} \leq G$ where $\rtimes_\varphi$ denote a non-trivial semidirect product (otherwise $G$ will be nilpotent). Note that $H$ is a central subgroup of $G$, we have $G\cong H\times K$ with $gcd(|H|,|K|)=1$. Thus, as $H$ is cyclic, by Proposition \ref{quotient-direct-product-prop}, we have 
\begin{equation}\label{eq-2}
  \theta(G)=\theta(H)\cdot \theta(K)=\theta(K).  
\end{equation}

Case 2(a): If $G_2\cong \mathbb{Z}_{2^k}\times \mathbb{Z}_2$, we have $$7/16<\theta(G)=\theta(K)=\theta(\mathbb{Z}_3\rtimes_\varphi (\mathbb{Z}_{2^k}\times \mathbb{Z}_2)).$$

A non-trivial semidirect product $\rtimes_\varphi$ corresponds to a non-trivial homomorphism $\varphi: \mathbb{Z}_{2^k}\times \mathbb{Z}_2\rightarrow Aut(\mathbb{Z}_3)$ and two such products are isomorphic iff the corresponding homomorphisms lie in the same orbit under the action of $Aut(\mathbb{Z}_{2^k}\times \mathbb{Z}_2)$.

For $k=1$, only one such non-isomorphic semidirect product exists and we have $K\cong D_6$, the dihedral group of order $12$. For $k\geq 2$, two such non-isomorphic semidirect products exist and $K\cong \mathbb{Z}_{2^k}\times S_3$ or $K\cong \mathbb{Z}_2\times (\mathbb{Z}_3\rtimes \mathbb{Z}_{2^k})$. As $$\theta(D_6)=\dfrac{3}{8}, \theta(\mathbb{Z}_{2^k}\times S_3)=\dfrac{1}{3}+\dfrac{1}{6\cdot 4^k}, \theta(\mathbb{Z}_2\times (\mathbb{Z}_3\rtimes \mathbb{Z}_{2^k}))=\dfrac{1}{4}+\dfrac{1}{2^{2k+1}}.$$

In all cases, $\theta(K)<7/16$ contradicting that $\theta(G)>7/16$. Thus no such group $G$ exists if $G_2\cong \mathbb{Z}_{2^k}\times \mathbb{Z}_2$.

Case 2(b): If $G_2\cong \mathbb{Z}_{2^k}$, then $K\cong \mathbb{Z}_3\rtimes \mathbb{Z}_{2^k}$, and by Theorem \ref{CycProb=1/2}, we have $\theta(K)=1/2$. However, this along with Equation \ref{eq-2} yields $\theta(G)=1/2$, a contradiction. Hence the theorem follows.\qed

Following similar line of arguments, one can show that 
\begin{enumerate}
    \item If $\theta(G)=7/16$, then $G\cong D_4\times \mathbb{Z}_m$ or $Q_{16}\times \mathbb{Z}_m$ where $m$ is an odd positive integer.
    \item There does not exist any group $G$ with $11/27<\theta(G)<7/16$.
    \item If $\theta(G)=11/27$, then $G$ is abelian and $G\cong \mathbb{Z}_3\times \mathbb{Z}_3\times \mathbb{Z}_m$ where $gcd(m,3)=1$.
\end{enumerate}

We omit those detailed proofs as they are repetitive in nature. Nevertheless we plan to include those proofs in an extended version of this paper in future.

\section{Sufficient Conditions for Nilpotency and (Super)-Solvability}\label{sufficient_conditions_section}
One of the main motivations for studying probabilistic invariants is their power to detect global algebraic properties. While the commuting probability $cp(G)$, $\sigma'(G)$ and $\sigma''(G)$ are known to give criteria for nilpotency and solvability above certain values, we show that $\theta(G)$ can serve an analogous – and sometimes sharper – role. It has already been noted in Theorem \ref{CycProb>1/2} that $\theta(G)>1/2$ implies that $G$ is nilpotent. However, we can make a stronger statement as well, which we prove next.

\begin{theorem}\label{nilpotent-threshold-theorem}
    Let $G$ be a finite group with $\theta(G)>\frac{2}{5}$ and $\theta(G) \neq \frac{1}{2}$, then $G$ is nilpotent.
\end{theorem}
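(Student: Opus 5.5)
The plan is to reduce the statement to the commuting probability and then reuse, almost verbatim, the structural analysis carried out in Case 2 of the proof of Lemma \ref{no-group-(7/16,1/2)}, with the threshold $7/16$ replaced by $2/5$.

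\textbf{Reduction to groups isoclinic to $S_3$.} Suppose $G$ is non-nilpotent with $\theta(G)>2/5$ and $\theta(G)\neq 1/2$. Since $\theta(G)\leq cp(G)$, we get $cp(G)>2/5$.

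I will invoke the classification of commuting probabilities above $11/32$ due to Rusin \cite{rusin}, together with Lescot's description of the non-nilpotent case \cite{lescot}. Their content, as I intend to use it, is that a non-nilpotent group with $cp(G)>2/5$ must have $cp(G)=1/2$ and be isoclinic to $S_3$. The competing non-nilpotent quotients $G/Z(G)$, such as $D_5$ or $A_4$, give $cp(G)\leq 2/5$. This is the step I expect to be the main obstacle, since it relies on an external classification. If I cannot locate a clean citation, my fallback is to prove directly that $|G:Z(G)|\leq 6$, using the bound of Proposition \ref{G'-prop-1} and the standard estimate $cp(G)\leq \frac{1}{p}\left(1+\frac{p-1}{|G:Z(G)|}\right)$ type inequalities. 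Non-nilpotency would then force $G/Z(G)\cong S_3$.

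\textbf{Structure of $G$.} Once $G/Z(G)\cong S_3$ and $G'\cong\mathbb{Z}_3$, I take $G$ to be a counterexample of minimal order and follow Claims 1 and 2 of Case 2 in Lemma \ref{no-group-(7/16,1/2)}.

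First, I pass to quotients $G/S$ by central subgroups $S$ of prime order, or of order $t$ coprime to $6$. Minimality forces either $\theta(G/S)=1/2$ or $G/S$ nilpotent. The second option is impossible, because $G/S$ still has $S_3$ as a quotient. Hence $\theta(G/S)=1/2$, and Theorem \ref{CycProb=1/2} gives $9\nmid |G|$. The exceptional case $|G|=18$ is settled by direct inspection.

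Next, exactly as in Claim 2, the odd Sylow subgroups are cyclic and those for primes $p>3$ are central. The Sylow $2$-subgroup $G_2$ is abelian and not normal.

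Then the bound $2/5<\theta(G_2)\leq \psi(\exp(G_2))/|G_2|$ from Theorem \ref{exponent-upper-bound} gives $|G_2|<\tfrac{15}{4}\cdot 2^{k-1}$, where $2^k=\exp(G_2)$. Hence $|G_2|\leq 2^{k+1}$, so $G_2\cong\mathbb{Z}_{2^k}$ or $G_2\cong\mathbb{Z}_{2^k}\times\mathbb{Z}_2$. Writing $G\cong H\times K$ with $H$ cyclic and central of coprime order, Proposition \ref{quotient-direct-product-prop} gives $\theta(G)=\theta(K)$.

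\textbf{Final case check.} If $G_2\cong\mathbb{Z}_{2^k}$, then $\theta(K)=1/2$ by Theorem \ref{CycProb=1/2}, which is excluded by hypothesis. Otherwise $K$ is one of $D_6$, $\mathbb{Z}_{2^k}\times S_3$, or $\mathbb{Z}_2\times(\mathbb{Z}_3\rtimes\mathbb{Z}_{2^k})$ with $k\geq 2$. Their cyclic probabilities are
$$\frac{3}{8},\qquad \frac{1}{3}+\frac{1}{6\cdot 4^k}\leq \frac{1}{3}+\frac{1}{96},\qquad \frac14+\frac{1}{2^{2k+1}}\leq\frac14+\frac1{32},$$
and all of these are below $2/5$. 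This contradiction shows that no such $G$ exists, so $G$ is nilpotent.
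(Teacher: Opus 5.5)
Your proof is correct, but the first half takes a different route from the paper's.

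\textbf{How the paper gets to $S_3$.} The paper never uses a full list of commuting probabilities. It combines the smallest-prime bound with Proposition~\ref{G'-prop-1} to obtain $p\in\{2,3\}$ and $|G'|\le 4$. It then disposes of $p=3$, $G'\cong\mathbb{Z}_2$ and $G'\cong\mathbb{Z}_2\times\mathbb{Z}_2$ using lemmas of \cite{barry}. For a minimal counterexample with $G'\cong\mathbb{Z}_3$ or $\mathbb{Z}_4$ there are two cases:
\begin{enumerate}
\item $Z(G)=1$, so $|G|\le|G'|\,|\mathrm{Aut}(G')|\le 8$ and only $S_3$ remains;
\item $\theta(G/Z(G))=1/2$, which forces $G/Z(G)\cong S_3$ and sends the argument back to Case~2 of Lemma~\ref{no-group-(7/16,1/2)}.
\end{enumerate}

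\textbf{How you get there.} You read off from Rusin's classification of groups with $cp(G)>11/32$ \cite{rusin}, together with \cite{lescot}, that a non-nilpotent group with $cp(G)>2/5$ is isoclinic to $S_3$. This is a legitimate citation and puts you in Case~2 at once, skipping the $|G'|$ bookkeeping. The cost is reliance on a heavier external classification than the paper's targeted lemmas.

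\textbf{The second half.} This coincides with the Case~2 analysis the paper invokes, with one difference. The paper only says ``proceeding as in Case~2'', whereas you explicitly check that the final values $3/8$, $\frac13+\frac{1}{6\cdot 4^k}$ and $\frac14+\frac{1}{2^{2k+1}}$ stay below $2/5$. The intermediate steps also survive the change of threshold from $7/16$ to $2/5$: $\theta(Z(G))>4/5$ still forces $Z(G)$ cyclic, and $\theta(\mathbb{Z}_3\times S_3)=19/54<2/5$ settles the order-$18$ case.

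\textbf{Two corrections.}
\begin{enumerate}
\item From $2/5<3\cdot 2^{k-1}/|G_2|$ one gets $|G_2|<\frac{15}{4}\cdot 2^{k}$, not $\frac{15}{4}\cdot 2^{k-1}$. The inequality as you wrote it would force $G_2$ to be cyclic. The conclusion you actually use, $|G_2|\le 2^{k+1}$, is the correct one.
\item Your stated fallback would not work. For $p=2$ the bound $cp(G)\le\frac12\bigl(1+\frac{1}{|G:Z(G)|}\bigr)$ is vacuous once $cp(G)>2/5$. Moreover, Proposition~\ref{G'-prop-1} bounds $|G'|$, not $|G:Z(G)|$. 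Passing from $|G'|\le 4$ to $G/Z(G)\cong S_3$ is exactly what the paper's case analysis with \cite{barry} supplies.
\end{enumerate}
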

\pf If $G$ is abelian, then the theorem holds. So we assume $G$ to be a non-abelian group such that $\theta(G)>\frac{2}{5}$ and $\theta(G) \neq \frac{1}{2}$. If $p$ is the smallest prime dividing $|G|$, then we have $\frac{2}{5}<\theta(G)\leq \frac{p^2+p-1}{p^3}$, i.e., $p=2$ or $3$. Again, by Proposition \ref{G'-prop-1}, we have $$\frac{2}{5}<\theta(G) \leq \frac{1}{p^2}\left(1+ \frac{p^2-1}{|G'|} \right).$$ 
If $p=3$, then this gives $|G'|\leq 3$, i.e., $G'\cong \mathbb{Z}_3$. However, by Lemma 3.11(iv) \cite{barry}, this implies $G$ is nilpotent. Thus $p=2$ and the above inequality gives $|G'|\leq 4$. Hence $$G' \cong {\mathbb{Z}_2} \text{ or } {\mathbb{Z}_3} \text{ or } {\mathbb{Z}_4} \text{ or } {\mathbb{Z}_2 \times \mathbb{Z}_2}. $$ If $G'\cong \mathbb{Z}_2$, then by Lemma 3.11(iv) \cite{barry}, $G$ is nilpotent. If $G'\cong \mathbb{Z}_2 \times \mathbb{Z}_2$, then by Lemma 3.13 \cite{barry}, $G$ is either nilpotent or $cp(G)=1/3$. The later cannot occur as that would imply $1/3=cp(G)\geq \theta(G)>2/5$, i.e., $1/3>2/5$, a contradiction. So, we are left with cases when $G'\cong \mathbb{Z}_3$ or $\mathbb{Z}_4$.

Let, if possible, $G$ be a non-nilpotent group of minimum order such that $1/2\neq \theta(G)>2/5$. Also we have $G'\cong \mathbb{Z}_3$ or $\mathbb{Z}_4$. If $Z(G)$ is non-trivial, we have $\theta(G/Z(G))\geq \theta(G)>2/5$. Also $G/Z(G)$ is non-nilpotent, this contradicts the minimality of $|G|$, unless $\theta(G/Z(G))=1/2$.  

\noindent {\it Claim:} $\theta(G/Z(G))\neq 1/2$.\\
{\it Proof of Claim:} If $\theta(G/Z(G))=1/2$, then by Theorem \ref{CycProb=1/2}, $cp(G/Z(G))=1/2$ and $G/Z(G) \cong \mathbb{Z}_3\rtimes \mathbb{Z}_{2m}$, where $gcd(m,3)=1$. So, from \cite{rusin}, we get $(G/Z(G))'\cong G'/(G'\cap Z(G))) \cong \mathbb{Z}_3$. As $G'\cong \mathbb{Z}_3$ or $\mathbb{Z}_4$, this rules out $G'\cong \mathbb{Z}_4$ and implies $G'\cong \mathbb{Z}_3$ and $G'\cap Z(G)$ is trivial. Hence by Lemma 3.9 \cite{barry}, we have $|G/Z(G)|\leq 6$, i.e., $G/Z(G)\cong S_3$ (since $G/Z(G)$ is non-nilpotent). Thus $G$ is isoclinic to $S_3$ and hence by Lemma 2.4 \cite{lescot}, $cp(G)=1/2$. Now, proceeding as in Case 2 of proof of Lemma \ref{no-group-(7/16,1/2)}, we get a contradiction.

Thus $Z(G)$ must be trivial and by Lemma 3.9 \cite{barry}, we have $|G|\leq |G'|\cdot |Aut(G')|$. As $|G'|=3$ or $4$, we have $|G|\leq 6$ or $8$. Since $S_3$ is the only non-nilpotent group with order $\leq 8$ and $\theta(S_3)=1/2$, we get a contradiction. Hence the theorem follows.\qed

The bound in the above theorem is tight, as we have a family of non-nilpotent groups $G_k\cong (\mathbb{Z}_5\rtimes \mathbb{Z}_{2^k})$ such that $\theta(G_k)=2/5$. However, in the next theorem, we prove a stronger result.

\begin{theorem}\label{CycProb=2/5_non_nilpotent}
    Let $G$ be a finite group with $\theta(G)=\frac{2}{5}$, then $G$ is non-nilpotent.
\end{theorem}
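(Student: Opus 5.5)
The plan is to argue by contradiction: assume $G$ is nilpotent with $\theta(G)=\frac{2}{5}$, and show that the $2$-adic structure of $\theta$ for a nilpotent group forbids the factor $5$ in the denominator. As in the proofs of Theorem \ref{CycProb>1/2} and Lemma \ref{no-group-(7/16,1/2)}, write $G\cong G_2\times G_{odd}$, where $G_2$ is the Sylow $2$-subgroup and $G_{odd}$ is the direct product of the odd Sylow subgroups. Since $\gcd(|G_2|,|G_{odd}|)=1$, Proposition \ref{quotient-direct-product-prop} gives
$$\theta(G)=\theta(G_2)\cdot\theta(G_{odd}).$$
Note that $G$ is not cyclic, since otherwise $\theta(G)=1$.

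\textbf{Step 1: $G_{odd}$ is cyclic.} Suppose $G_{odd}$ is non-cyclic. Its smallest prime divisor $q$ satisfies $q\geq 3$. Theorem \ref{smallest-prime-theorem} then gives
$$\theta(G_{odd})\leq \frac{q^2+q-1}{q^3}\leq \frac{11}{27},$$
because $x\mapsto \frac{x^2+x-1}{x^3}$ is decreasing. Since $\theta(G_2)\leq 1$, we get $\theta(G)\leq \frac{11}{27}<\frac{2}{5}$, a contradiction. Hence $G_{odd}$ is cyclic, so $\theta(G_{odd})=1$ and $\theta(G)=\theta(G_2)$.

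\textbf{Step 2: the arithmetic contradiction.} If $|G_2|=2^n$, then by definition
$$\theta(G_2)=\frac{|S_{G_2}|}{2^{2n}},$$
a rational number whose reduced denominator is a power of $2$. But $\frac{2}{5}$ in lowest terms has denominator $5$. Equivalently, $5|S_{G_2}|=2^{2n+1}$ is impossible, since $5\nmid 2^{2n+1}$. This contradiction shows $G$ cannot be nilpotent.

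There is no serious obstacle here. The only point needing care is Step 1, namely that the bound of Theorem \ref{smallest-prime-theorem} applies to $G_{odd}$ as a non-cyclic group in its own right. This holds because $G_{odd}$ is a finite group whose smallest prime divisor is at least $3$, so the theorem applies directly.
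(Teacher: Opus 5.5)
Your Step 1 rests on a false inequality: $\frac{11}{27}<\frac{2}{5}$ does not hold, since $11\cdot 5=55>54=2\cdot 27$, so in fact $\frac{11}{27}>\frac{2}{5}$. The bound from Theorem \ref{smallest-prime-theorem} with $q=3$ therefore gives no contradiction, and a non-cyclic $G_{odd}$ is not excluded. Indeed $\theta(\mathbb{Z}_3\times\mathbb{Z}_3)=\frac{11}{27}$ already exceeds $\frac{2}{5}$. Applying the theorem to $G_{odd}$ through its smallest prime is also lossy: for $\mathbb{Z}_3\times\mathbb{Z}_5\times\mathbb{Z}_5$ it gives only $\frac{11}{27}$, while the true value is $\frac{29}{125}$. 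Without Step 1, the reduction $\theta(G)=\theta(G_2)$ in Step 2 is unjustified, so the proof as written does not go through.

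The missing idea is to run your denominator argument over all primes, not just $2$, and to apply the size bound only to the Sylow $5$-subgroup. Write $\theta(G)=\prod_p |S_{G_p}|/|G_p|^2$. Then $\theta(G)=\frac{2}{5}$ becomes $5\prod_p|S_{G_p}|=2\prod_p|G_p|^2$, so $5\mid |G|$. If $G_5$ were cyclic, the factor $|S_{G_5}|=|G_5|^2$ would cancel from both sides, leaving a right-hand side prime to $5$, which is impossible. Hence $G_5$ is non-cyclic, and Theorem \ref{smallest-prime-theorem} with $p=5$ gives $\theta(G)\le\theta(G_5)\le\frac{29}{125}<\frac{2}{5}$. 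This is exactly the paper's proof.

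Alternatively, within your own framework, split off the Hall $\{2,3\}'$-part $H$ instead of $G_{odd}$. If $H$ is non-cyclic, its smallest prime is at least $5$, so $\theta(G)\le\theta(H)\le\frac{29}{125}<\frac{2}{5}$. If $H$ is cyclic, then $\theta(G)=\theta(G_2)\,\theta(G_3)$ has reduced denominator of the form $2^a3^b$, so it cannot equal $\frac{2}{5}$.
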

\pf Let $\theta(G)=\frac{2}{5}$. If possible, let $G$ be nilpotent. Then $$G \cong \prod_{p\mid |G|}{G_{p}} \mbox{ i.e., } \theta(G)=\prod_{p\mid |G|}{\theta(G_{p})}.$$  Since $\theta(G)=\frac{2}{5}$, we have $5\mid|G|$ and $P_5$ must be non-cyclic. Thus we have $$\frac{2}{5}= \theta(G) \leq \theta(G_5)\leq \frac{29}{125}, \mbox{ a contradiction.}$$ \qed 

\begin{theorem}\label{supersolvable_threshold_theorem}
    If $\theta(G)>7/24$, then $G$ is supersolvable.
\end{theorem}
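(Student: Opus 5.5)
We argue by minimal counterexample, in the same way as Theorem \ref{nilpotent-threshold-theorem}. Let $G$ be a non-supersolvable group of minimum order with $\theta(G)>7/24$. Two closure facts drive the reduction. First, supersolvability is inherited by quotients. Second, by Proposition \ref{quotient-direct-product-prop}, $\theta(G/N)\geq\theta(G)>7/24$ for every $N\lhd G$.

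\textbf{Bounding $p$ and $|G'|$.} Let $p$ be the smallest prime dividing $|G|$. Theorem \ref{smallest-prime-theorem} gives $7/24<\frac{p^2+p-1}{p^3}$. For $p=5$ the right side is $29/125<7/24$, so $p\in\{2,3\}$. Proposition \ref{G'-prop-1} then bounds $|G'|$ from
$$\frac{7}{24}<\frac{1}{p^2}\left(1+\frac{p^2-1}{|G'|}\right).$$
\begin{itemize}
\item If $p=3$, this yields $|G'|\leq 4$, hence $|G'|=3$. Then $G'$ is cyclic, so $G$ is supersolvable (indeed nilpotent, by Lemma 3.11(iv) \cite{barry}).
\item If $p=2$, this yields $3|G'|<18$, i.e.\ $|G'|\leq 5$. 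So $G'\in\{\mathbb{Z}_2,\mathbb{Z}_3,\mathbb{Z}_4,\mathbb{Z}_2\times\mathbb{Z}_2,\mathbb{Z}_5\}$.
\end{itemize}

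\textbf{Handling the commutator subgroups.} Whenever $G'$ is cyclic, $G$ is supersolvable. The reason is that the chain $1\leq G'\leq G$ refines to a normal series with cyclic factors: every subgroup of the cyclic normal subgroup $G'$ is characteristic, hence normal in $G$, and $G/G'$ is abelian. This disposes of $\mathbb{Z}_2,\mathbb{Z}_3,\mathbb{Z}_4,\mathbb{Z}_5$.

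The only remaining case is $G'\cong\mathbb{Z}_2\times\mathbb{Z}_2$. By Lemma 3.13 \cite{barry}, $G$ is then either nilpotent (hence supersolvable) or has $cp(G)=1/3$. In the latter case $G$ is isoclinic to $A_4$. So the whole proof reduces to showing: \emph{if $G'\cong \mathbb{Z}_2^2$ and $G/Z(G)\cong A_4$, then $\theta(G)\leq 7/24$.} This is the step I expect to be the main obstacle, since $cp(G)=1/3$ exceeds $7/24$ and the crude bound $\theta(G)\leq cp(G)$ is not enough.

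\textbf{The $A_4$-isoclinic case.} I would first use minimality.
\begin{itemize}
\item If $Z(G)\neq 1$, then $G/Z(G)\cong A_4$ is non-supersolvable and $\theta(A_4)=\frac{1+3\cdot 3+8\cdot 4}{144}=42/144=7/24$. Since $\theta(G)\leq\theta(G/Z(G))=7/24$, this already contradicts $\theta(G)>7/24$.
\item If $Z(G)=1$, then $G\cong A_4$ directly, since $G$ is isoclinic to $A_4$ with trivial centre; equivalently, Lemma 3.9 \cite{barry} gives $|G|\leq|G'||\mathrm{Aut}(G')|=24$. This leaves only the small groups $A_4$ and $S_4$ to check. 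But $S_4'=A_4$ is not $\mathbb{Z}_2^2$, so only $A_4$ remains, and $\theta(A_4)=7/24$ is not $>7/24$.
\end{itemize}
Both cases contradict the choice of $G$, so no counterexample exists. The point to verify carefully is that isoclinism to $A_4$ really forces $G/Z(G)\cong A_4$, which holds by definition of isoclinism. This also shows the bound is sharp, with $A_4$ attaining $7/24$.
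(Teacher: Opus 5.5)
\textbf{Main gap: the bound on $|G'|$ for $p=2$ is miscomputed.} Proposition~\ref{G'-prop-1} gives
\[
\frac{7}{24}<\frac{1}{4}\Bigl(1+\frac{3}{|G'|}\Bigr)\iff \frac{1}{6}<\frac{3}{|G'|}\iff |G'|<18,
\]
not $3|G'|<18$. Your $p=3$ computation, $|G'|<192/39$, is correct. For $p=2$, however, your case analysis covers only $|G'|\le 5$. It says nothing about non-cyclic $G'$ with $6\le |G'|\le 17$. These include $\mathbb{Z}_4\times\mathbb{Z}_2$, $\mathbb{Z}_2^3$, $\mathbb{Z}_6\times\mathbb{Z}_2$, $\mathbb{Z}_4\times\mathbb{Z}_4$, $\mathbb{Z}_8\times\mathbb{Z}_2$, $\mathbb{Z}_4\times\mathbb{Z}_2^2$, $\mathbb{Z}_2^4$, and a priori non-abelian groups as well.

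This range is not harmless. $SL(2,3)$ is non-supersolvable with $SL(2,3)'\cong Q_8$, and its element orders give $\theta(SL(2,3))=168/576=7/24$ exactly. So any argument covering this range has to be sharp.

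\textbf{What the paper does to close this range.} It applies minimality globally, not only inside one case. If $Z(G)\neq 1$, take a central $N$ of prime order. Then $G/N$ is still non-supersolvable, since $N$ is cyclic and normal, and $\theta(G/N)\ge\theta(G)$; this contradicts minimality. So $Z(G)=1$, and Lemma 3.9 of \cite{barry} gives $|G|\le |G'|\,|\mathrm{Aut}(G')|$. The candidates for $G'$ are then pruned via \cite{heffernan}, and most are eliminated by a computer search. Two cases are done by hand:
\begin{itemize}
\item For $G'\cong\mathbb{Z}_4\times\mathbb{Z}_2^2$, the subgroup of squares has order $2$ and is characteristic in $G'$. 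It is therefore central in $G$, contradicting $Z(G)=1$.
\item For $G'\cong\mathbb{Z}_2^4$, one has $o(x)\mid 2\,o(xG')$, so $\psi(o(x))\le 3\,\psi(o(xG'))$. Summing over cosets gives $\theta(G)\le \frac{3}{16}\theta(G/G')<\frac{7}{24}$.
\end{itemize}
Your proposal contains nothing that does this work, so as written it proves the theorem only under the extra hypothesis $|G'|\le 5$.

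\textbf{A smaller point in the $\mathbb{Z}_2\times\mathbb{Z}_2$ case.} The inference ``$cp(G)=1/3$, hence $G$ is isoclinic to $A_4$'' is not valid on its own, since $cp(D_9)=1/3$ as well. What you actually need is $G/Z(G)\cong A_4$, and this does follow from $G'\cong\mathbb{Z}_2^2$ with $G$ non-nilpotent:
\begin{itemize}
\item $G/C_G(G')$ is an abelian subgroup of $\mathrm{Aut}(G')\cong S_3$. It cannot be a $2$-group, because then $G'$ would lie in the hypercentre and $G$ would be nilpotent. Hence $G/C_G(G')\cong\mathbb{Z}_3$.
\item Coprime action of a $3$-element $x$ on the Sylow $2$-subgroup of $C=C_G(G')$ shows that $C$ is abelian.
\item The map $c\mapsto[c,x]$ is then a homomorphism from $C$ onto $G'$ with kernel $C_C(x)=Z(G)$. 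So $|G:Z(G)|=12$ and $G/Z(G)\cong A_4$.
\end{itemize}
With that justification, your bound $\theta(G)\le\theta(G/Z(G))=\theta(A_4)=7/24$ is a clean replacement for the paper's computer check of this one case.
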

\pf If $G$ is abelian, there is nothing to prove. Let $G$ be non-abelian and $p$ be the smallest prime dividing $|G|$. If $p\geq 5$, then by Theorem \ref{smallest-prime-theorem}, $\theta(G)\leq 29/125<7/24$, a contradiction. Thus the smallest prime dividing $|G|$ is $2$ or $3$, i.e., $G \in \mathcal{G}_2$ or $\mathcal{G}_3$, where $\mathcal{G}_p$ denote the set of finite groups with $p$ as the smallest prime factor of $|G|$. 

If $G \in \mathcal{G}_3$, then by Proposition \ref{G'-prop-1} we have $$\dfrac{7}{24}<\theta(G)\leq \dfrac{1}{9}\left(1+\dfrac{8}{|G'|} \right), \mbox{ i.e., }|G'|<5.$$ If $G'$ is cyclic, then $G$ is supersolvable. Thus the only case which survives with $|G'|<5$ is $G'\cong \mathbb{Z}_2\times \mathbb{Z}_2$. However, as $G\in \mathcal{G}_3$, i.e., $3$ is the smallest prime dividing $|G|$, we have $|G'|\neq 4$.

So, we assume that  $G \in \mathcal{G}_2$ is the minimum counterexample, i.e, $G$ is a non-supersolvable group of minimum order such that $\theta(G)>7/24$. By Proposition \ref{G'-prop-1}, we have $$\dfrac{7}{24}<\theta(G)\leq \dfrac{1}{4}\left(1+\dfrac{3}{|G'|} \right), \mbox{ i.e., }|G'|<18.$$

As groups with cyclic commutator subgroup are supersolvable, $G \in \mathcal{G}_2$ and by Theorem 4.1 \cite{heffernan}, the only groups which are still left as a potential candidate of $G'$ are: $\mathbb{Z}_2\times \mathbb{Z}_2,\mathbb{Z}_4\times \mathbb{Z}_2,  \mathbb{Z}^3_2, \mathbb{Z}_6\times \mathbb{Z}_2, \mathbb{Z}_4\times \mathbb{Z}_4,  \mathbb{Z}_8\times \mathbb{Z}_2, \mathbb{Z}_4\times \mathbb{Z}_2\times \mathbb{Z}_2, \mathbb{Z}^4_2$.

Moreover, if $Z(G)$ is non-trivial, then $G$ has a central subgroup $N$ of prime order and $G/N$ is non-supersolvable, i.e., $\frac{7}{24}<\theta(G)\leq \theta(G/N)$, which contradicts the minimality of $G$ as a counterexample. Thus $Z(G)$ is trivial and hence, using Lemma 3.9 \cite{barry}, we have $|G|\leq |G'|\cdot |Aut(G')|$.

Thus for all the above choices of $G'$, we run an exhaustive search using GAP/SAGE \cite{sage} on non-supersolvable groups $G$ such that 
\begin{itemize}
    \item $|G|\leq |G'|\cdot |Aut(G')|$.
    \item $|Z(G)|=1$.
    \item $G$ has no cyclic normal subgroups.
    \item $\theta(G)>7/24$
\end{itemize}
Except for $G'\cong \mathbb{Z}_4\times \mathbb{Z}_2\times \mathbb{Z}_2$ and $\mathbb{Z}^4_2$, for all other choices of $G'$, it was seen that no such groups exist. The bound $|G|\leq |G'|\cdot |Aut(G')|$ is too large for the above two groups to run an exhaustive search using the Small Group library in GAP (See Table \ref{G-order-bound}). So, we treat them separately.

\begin{table}[h]
    \centering
    \begin{tabular}{|c|c||c|c|}
     \hline   $G'$ & $|G'|\cdot |Aut(G')|$ & $G'$ & $|G'|\cdot |Aut(G')|$ \\ \hline
       $\mathbb{Z}_2\times \mathbb{Z}_2$  & $24$ & $\mathbb{Z}_4\times \mathbb{Z}_4$  & $1536$\\ \hline
         $\mathbb{Z}_4\times \mathbb{Z}_2$  & $64$ & $\mathbb{Z}_8\times \mathbb{Z}_2$  & $256$\\ \hline
         $\mathbb{Z}^3_2$  & $1344$ & $\mathbb{Z}_4\times \mathbb{Z}_2\times \mathbb{Z}_2$  & $3072$\\ \hline
         $\mathbb{Z}_6\times \mathbb{Z}_2$  & $144$ & $\mathbb{Z}^4_2$  & $322560$\\ \hline
    \end{tabular}
    \caption{Upper Bounds on $|G|$}
    \label{G-order-bound}
\end{table}

Note that $G'\cong \mathbb{Z}_4\times \mathbb{Z}_2\times \mathbb{Z}_2$ and $Z(G)$ is trivial cannot hold simultaneously as $G'$ has a unique element $(2,0,0)$ of order $2$, which is a square in $G'$ and hence central in $G$. So we are left only with the case when $G\cong \mathbb{Z}^4_2$.

If $G'\cong \mathbb{Z}^4_2$, let $x\in G$ and $h=xG'\in G/G'=A$ (say) and $x^{o(h)}\in G'$. As $G'$ is elementary abelian $2$-group, $x^{2\cdot o(h)}=e$, i.e., $o(x)$ divides $2\cdot o(h)$. Using submultiplicativity of $\psi$, we get $\psi(o(x))\leq 3\cdot \psi(o(h))$.  

As each coset $h$ contains exactly $16$ elements of $G$, its total contribution to $|S_G|$ is at most $16\cdot 3\cdot \psi(o(h))$. Summing over all $h \in A$, we get $$|S_G|\leq 48\sum_{h \in G/G'}\psi(o(h))=48\cdot |S_A|.$$
Thus $$\theta(G)=\dfrac{|S_G|}{|G|^2}\leq \dfrac{48\cdot |S_A|}{16^2\cdot |A|^2}=\dfrac{3}{16}\cdot \theta(A)\leq \dfrac{3}{16}<\dfrac{7}{24}, \mbox{ a contradiction}.$$\qed

\begin{theorem}\label{solvable_threshold_theorem}
    If $\theta(G)>3/40$, then $G$ is solvable.
\end{theorem}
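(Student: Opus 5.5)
The plan is to argue by a minimal counterexample and combine the quotient monotonicity of Proposition \ref{quotient-direct-product-prop} with known bounds on the commuting probability of non-solvable groups. The threshold comes from $A_5$. Proposition \ref{psi_connection_proposition} gives
$$|S_{A_5}|=1\cdot 1+15\cdot\psi(2)+20\cdot\psi(3)+24\cdot\psi(5)=1+45+80+144=270,$$
so $\theta(A_5)=270/3600=3/40$. Hence the statement says that $A_5$ maximizes $\theta$ among non-solvable groups, and the bound cannot be improved.

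\emph{Step 1 (reduction).} Let $G$ be a non-solvable group of minimum order with $\theta(G)>3/40$. For any non-trivial $N\lhd G$, Proposition \ref{quotient-direct-product-prop} gives $\theta(G/N)\geq\theta(G)>3/40$, so $G/N$ is solvable by minimality. If some minimal normal subgroup $M$ were solvable, then $G$ would be solvable. So every minimal normal subgroup is non-solvable. If there were two of them, $M_1\neq M_2$, then $G$ would embed in $G/M_1\times G/M_2$, which is solvable, a contradiction. Hence $G$ has a unique minimal normal subgroup $M\cong T^k$ with $T$ non-abelian simple, $C_G(M)=1$, and $G$ embeds in $\mathrm{Aut}(T)\wr S_k$. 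In particular $Z(G)=1$ and $G$ is not of the form $A_5\times A$ with $A$ non-trivial abelian.

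\emph{Step 2 (commuting-probability input).} By Guralnick--Robinson, every non-solvable $G$ satisfies $cp(G)\leq 1/12$, with equality exactly for $A_5\times A$, $A$ abelian. Since $\theta(G)\leq cp(G)$, this already gives $\theta(G)\leq 1/12$. It remains to close the gap between $3/40$ and $1/12$.
\begin{itemize}
\item If $G\cong A_5\times A$, then $\theta(G)\leq\theta(A_5)\theta(A)\leq 3/40$ by Proposition \ref{quotient-direct-product-prop}.
\item Otherwise, I will use the strict part of Guralnick--Robinson: for the groups in Step 1 (trivial centre, unique non-abelian minimal normal subgroup), $cp(G)=K(G)/|G|$ is at most $K(G)/60$ with $|G|\geq 60$. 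I will bound $K(G)$ directly in the two cases below.
\end{itemize}

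\emph{Step 3 (the cases).}
\begin{itemize}
\item If $k\geq 2$, then $|G|\geq 3600$. Class-number bounds for $G\leq \mathrm{Aut}(T)\wr S_k$ will give $cp(G)\leq cp(T)$, or at least a bound far below $3/40$; alternatively, $\theta(G)\leq\psi(\exp(G))/|G|$ from Theorem \ref{exponent-upper-bound} may suffice.
\item If $k=1$, then $G$ is almost simple with socle $T$.
  \begin{itemize}
  \item For $G\neq A_5$ but $T=A_5$, we have $G=S_5$, and $\theta(S_5)$ can be computed directly; it is well below $3/40$.
  \item For $T\neq A_5$, I will use the bound $K(G)\leq$ (number of classes of $T$)$\cdot|G:T|$ together with the known fact that $cp(T)\leq 1/12$ holds strictly, with $cp(T)$ decaying in $|T|$. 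This should give $cp(G)<3/40$ once $|T|\geq 168$.
  \item The few small cases, namely $\mathrm{PSL}(2,7)$, $A_6$, $\mathrm{PSL}(2,8)$, $\mathrm{PSL}(2,11)$ and their automorphic extensions, will be checked by computing $\sum_g\psi(o(g))$ from element-order statistics.
  \end{itemize}
\end{itemize}

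The main obstacle is Step 3 for almost simple groups with $T\neq A_5$. A bound using only $cp$ works only if $cp(G)\leq 3/40$, which needs a quantitative result on class numbers of almost simple groups, not merely the $1/12$ bound. If that fails, I would fall back on the sharper inequality $|S_G|=\sum_g\psi(o(g))$ together with $\psi(n)\leq n\prod_{p\mid n}(1+1/p)$ and the fact that most elements of large simple groups have large order. For the residual small groups I would rely on a direct GAP computation, in the same spirit as the proof of Theorem \ref{supersolvable_threshold_theorem}.
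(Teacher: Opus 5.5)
Your Step 1 reduction and the value $\theta(A_5)=270/3600=3/40$ are correct, but the proof has a genuine gap exactly where the work lies. You only use the $1/12$ bound of Guralnick--Robinson and its equality case. Since $3/40<1/12$, knowing $cp(G)<1/12$ for groups not of the form $A_5\times A$ gives you nothing. Step 3 is then a list of intentions (``will give'', ``may suffice'', ``should give''), not an argument.

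Part of Step 3 is easy to finish. For $k\geq 2$, $cp(G)\leq cp(M)=cp(T)^k\leq 1/144$, because $cp$ cannot increase on passing to a subgroup. If $T=A_5$, then $G\in\{A_5,S_5\}$, and $\theta(S_5)=1/20$. But for almost simple $G$ with socle $T\neq A_5$ you need the quantitative fact $cp(T)\leq 3/40$. Your claim that ``$cp(T)$ decays in $|T|$'' is neither stated precisely nor cited. Without an explicit bound valid beyond some order threshold, a GAP check of $\mathrm{PSL}(2,7)$, $A_6$, $\mathrm{PSL}(2,8)$, $\mathrm{PSL}(2,11)$ closes nothing.

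The missing ingredient is Theorem 11 of Guralnick--Robinson, which is what the paper uses: if $cp(G)>3/40$, then $G$ is solvable or $G\cong A_5\times A$ with $A$ abelian. With it, your Steps 1--3 are unnecessary. From $cp(G)\geq\theta(G)>3/40$, the only non-solvable possibility is $G\cong A_5\times A$, and then $\theta(G)\leq\theta(A_5)\theta(A)\leq 3/40$ by Proposition \ref{quotient-direct-product-prop}, a contradiction.

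If you want to keep your own route, you can close the almost simple case directly. First, $cp(G)\leq cp(T)$. Next, $|T|=\sum_\chi\chi(1)^2\geq 1+(K(T)-1)d^2$, where $d$ is the least nontrivial character degree of $T$. The only non-abelian simple groups embedding in $SL_3(\mathbb{C})$ are $A_5$ and $\mathrm{PSL}(2,7)$, and none embeds in $SL_2(\mathbb{C})$. So every other $T$ has $d\geq 4$ and $|T|\geq 360$, giving $cp(T)\leq\frac{1}{16}+\frac{15}{16|T|}<\frac{3}{40}$. Finally, $cp(\mathrm{PSL}(2,7))=1/28$. Some such argument, or the citation, must be supplied; as written, the proposal does not establish the theorem.
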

\pf As $cp(G)\geq \theta(G)>3/40$, by Theorem 11 \cite{guralnick}, either $G$ is solvable, or else $G\cong A_5\times T$ for some abelian group $T$. So, it is enough to rule out the second case. However, in that case, we have $$\dfrac{3}{40}<\theta(G)=\theta(A_5\times T)\leq \theta(A_5)\cdot \theta(T)\leq \dfrac{3}{40},$$ a contradiction. Thus $G$ is solvable.

\begin{remark}
    All the bounds given in Theorems \ref{nilpotent-threshold-theorem}, \ref{supersolvable_threshold_theorem} and \ref{solvable_threshold_theorem} are tight, as $\theta(D_5\times \mathbb{Z}_3)=2/5$, $\theta(A_4)=7/24$ and $\theta(A_5)=3/40$.
\end{remark}

Theorems \ref{nilpotent-threshold-theorem}, \ref{supersolvable_threshold_theorem} and \ref{solvable_threshold_theorem} provide sufficient conditions for nilpotency, supersolvability and solvability of groups respectively. As in literature, many such sufficient conditions exist, it is natural to investigate their respective strengths. In view of that, we emphasize, in the next table, why $\theta(G)$ is better than some of the existing sufficient conditions while determining solvability/nilpotency of certain groups. The \textcolor{blue}{blue} ones are where the indicator works and \textcolor{red}{red} ones are where it does not. It is evident from Table \ref{comparison_table} that there are instances of groups where the other three indicators fail to recognize the group property, but $\theta(G)$ can detect it. Thus we conclude that this new parameter performs better than the other three, in many cases, if not in all cases.
\begin{table}[h]
    \centering
    \begin{tabular}{|c|c|c|c|c|c|}
      \hline Property & Group $G$ & $\theta(G)$ & $cp(G)$ & $\sigma'(G)$ & $\sigma''(G)$\\ \hline \hline 
      & & & & & \\
       Supersolvability & $D_{15}$ & \textcolor{blue}{$\dfrac{3}{10}>\dfrac{7}{24}$} & \textcolor{red}{$\dfrac{3}{10}<\dfrac{1}{3}$} & \textcolor{red}{$\dfrac{59}{147}<\dfrac{31}{77}$} & \textcolor{red}{$\dfrac{59}{300}<\dfrac{31}{144}$}\\  
       & & & & & \\ \hline
        & & & & & \\ 
      Solvability & $\mathbb{Z}_{19}\rtimes \mathbb{Z}_6$ & \textcolor{blue}{$\dfrac{3}{38}>\dfrac{3}{40}$} &  \textcolor{blue}{$\dfrac{3}{38}>\dfrac{3}{40}$} & \textcolor{red}{$\dfrac{241}{2401}<\dfrac{211}{1617}$} & \textcolor{red}{$\dfrac{241}{4332}<\dfrac{211}{3600}$}\\   & & & & & \\ \hline
       &  & & & & \\ 
       Nilpotency & $\mathbb{Z}_4\times \mathbb{Z}_2$ & \textcolor{blue}{$\dfrac{17}{32}>\dfrac{1}{2}$} & \textcolor{blue}{$1>\dfrac{1}{2}$} & \textcolor{red}{$\dfrac{23}{43}<\dfrac{13}{21}$} & \textcolor{red}{$\dfrac{23}{64}<\dfrac{13}{36}$} \\ 
        & & & & & \\ \hline
    \end{tabular}
    \caption{Comparison of $\theta(G),cp(G),\sigma'(G)$ and $\sigma''(G)$ as indicators}
    \label{comparison_table}
\end{table}

\section{Conclusion and Open Issues}\label{conclusion_section}
In this paper we have systematically investigated the cyclic probability $\theta(G)$ of a finite group $G$, which measures the likelihood that two randomly chosen elements generate a cyclic subgroup. Our results establish $\theta(G)$ as a powerful probabilistic invariant that captures subtle structural properties of finite groups, often outperforming classical invariants such as the commuting probability $cp(G)$ and the sum-of-element-orders based invariants $\sigma'(G)$ and $\sigma''(G)$.

Despite the progress made, several natural questions remain open for future investigation:
\begin{enumerate}
    \item \textbf{Classification for other rational values:} While we have classified groups for $\theta(G) = 5/8,\,1/2,\,7/16,\,11/27$, the complete set of possible values of $\theta(G)$ for finite groups is unknown. Is the set of cyclic probabilities dense in some interval? Are there isolated accumulation points analogous to those known for $cp(G)$?

    \item \textbf{Computational exploration:} The GAP‑based searches in this paper were limited by group order. A more extensive computational study of $\theta(G)$ for groups of order up to, say, $2000$ or $5000$ could reveal new patterns, counterexamples, or conjectures regarding the distribution and thresholds of cyclic probabilities.

    \item \textbf{Graph‑theoretic connections:} The cyclic probability is closely related to the enhanced power graph of a group, where the vertices are the elements of the group and the edges connect elements that generate a cyclic subgroup. Exploring this connection further — for instance, relating $\theta(G)$ to the edge density or clique number of the enhanced power graph — may yield new structural insights.

\end{enumerate}

\section*{Acknowledgement}
The first author is supported by the CSIR PhD fellowship (File no. $08/0155(26311)/2026-EMR-I$), Govt. of India. The second author acknowledges the funding of DST-FIST Sanction no. $SR/FST/MS-I/2019/41$, Govt. of India. 

\subsection*{Statements and Declarations}
Data sharing is not applicable to this article, as no datasets were generated or analyzed during the current study. The authors have no competing interests to declare that are relevant to the content of this article.

\end{document}